\newcommand{\norm}[1]{\left\lVert#1\right\rVert}
\DeclareMathAlphabet{\mymathbb}{U}{BOONDOX-ds}{m}{n}
 \newtheorem{thrm}{Theorem}
\newtheorem{lem}{Lemma}
\newtheorem{assump}{Assumption}
\newtheorem{prop}{Proposition}
\newtheorem{rmk}{Remark}
\newcommand{\yl}[1]{{\color{black}{#1}}}
\newcommand{\yo}[1]{{\color{black}{#1}}}
\newcommand{\ir}[1]{{\color{black}{#1}}}
\newcommand{\irr}[1]{{\color{black}{#1}}}
\newcommand{\icr}[1]{{\color{black}{#1}}}
\newcommand{\ryo}[1]{{\color{black}{#1}}}
\newcommand{\il}{\textcolor{black}}
\newcommand{\ic}{\textcolor{black}}
\newcommand{\icc}{\textcolor{black}}
\newcommand{\icl}{\textcolor{black}}
\newcommand{\yrr}[1]{{\color{black}{#1}}}
\newcommand{\icf}[1]{{\color{black}{#1}}}
\title{
A Distributed Scheme for Voltage and Frequency Control and Power Sharing in Inverter Based Microgrids}
\author{
	Yemi Ojo, Jeremy D. Watson, Khaled Laib and  Ioannis Lestas
\thanks{ 
	This work was supported by ERC starting grant 679774.}
\thanks{The authors are with the Department of Engineering, University of Cambridge, Trumpington Street, Cambridge, CB2 1PZ, United Kingdom. Emails: \{yo259, jdw69, kl507, icl20\} @cam.ac.uk.}
}
\begin{document}

	\maketitle
	\thispagestyle{empty}
	\pagestyle{empty}

	\begin{abstract}
	
Grid-forming \icf{inverter}-based autonomous microgrids present new operational challenges as the stabilizing rotational inertia of synchronous machines is absent. \ir{The design of efficient control policies for grid-forming inverters is, however, a non-trivial problem where multiple performance objectives need to be satisfied, including voltage/frequency regulation, current limiting capabilities, as well as \ryo{active} power sharing and a scalable operation.}
\yo{We propose in this paper a \ir{novel control architecture for frequency and voltage control which allows current limitation via an inner loop, \ryo{active} power sharing via a distributed secondary control policy and scalability by satisfying a passivity property.}
\ir{In particular,} the frequency controller employs the inverter output current and angle to provide an angle droop-like policy which improves its stability properties.  \ir{This also allows to incorporate a} secondary control policy for which we provide an analytical stability result which takes line conductances into account (in contrast to the lossless line assumptions in literature). The distinctive feature of the voltage control scheme is that it has a double loop structure that uses the DC voltage in the feedback control policy to implement a power-balancing strategy to improve performance.
\ir{The} performance of the control policy is illustrated via  simulations with detailed nonlinear models in a realistic setting.}

	\end{abstract}

	\begin{IEEEkeywords} Autonomous microgrids, grid-forming inverters, grid-forming control, passivity. \end{IEEEkeywords}

	\section{INTRODUCTION}%
	
\label{intro}
The advancement in renewable energy technologies and increasing energy demand bring about the proliferation of renewable energy generation such as wind and solar.
These renewable energy resources are usually interfaced with inverters and deployed  as distributed generation (DG), in contrast to the centralized power grids where large synchronous machines (SMs)  are used.
The combination of DG units and network into a controllable system gives rise to  microgrids which can be operated in grid-connected or autonomous mode.
The latter relies on grid-forming inverters for  frequency and voltage regulation.
However, these present new operational challenges as the stabilizing rotational inertia of SMs is absent, and grid-forming inverters have inherently low-inertia~\cite{federicoDofler2018}.
Therefore, it becomes crucial to develop new approaches that guarantee the stability of autonomous microgrids.

\yo{The design of efficient control schemes for \ir{inverters with a grid forming role is a non-trivial problem due to the tight ratings of power electronics and the fast timescales of their dynamics, which leads to multiple objectives that need to be satisfied. In addition to voltage and frequency regulation, it is also important to be able to achieve \ryo{active} power-sharing at faster timescales. Furthermore, current limiting capabilities is a significant property that is often facilitated by means of double loop control architectures. Moreover, in order to allow a large scale integration of such inverters, it is important for the design to be scalable, i.e, stability is ensured via decentralized conditions that allow a plug-and-play operation. Existing schemes that have been proposed in the literature, have primarily focused on stability and each generally satisfies some of these objectives. Therefore the development of more advanced control policies with improved performance is a significant problem of practical relevance.}

In this paper, we present a novel control \ir{scheme} which \ir{aims} 
\yl{to} achieve \ir{the performance objectives described above.}
In particular,  we propose a control architecture for frequency and voltage control which is scalable\footnote{By being scalable we mean the control policy allows to ensure stability by satisfying decentralized stability conditions which allow \ic{a plug-and-play capability}, \yrr{i.e. new devices that satisfy the stability conditions  can be integrated into a network while maintaining its stability, thus allowing to extend the network to a much larger one.}}, allows current limitation via an inner loop, and leads naturally to a distributed secondary controller \ir{that} achieves \ryo{active} power sharing. The frequency controller employs the inverter output current and angle to provide an angle droop-like policy which improves its stability \ir{properties and leads to a secondary control policy. For the latter we} provide an analytical stability result which takes line conductances into account (in contrast to the lossless line assumptions \ir{often used in the} literature~\cite{simpson2013, andreasson2013distributed}). The distinctive feature of the voltage control scheme is that it has a double loop structure that uses the DC voltage in the feedback control policy to implement a power-balancing strategy to improve performance. Using passivity analysis, we are also able to guarantee the stability of the frequency and voltage control at faster time-scales.}

\icr{A preliminary version of this work appeared in conference paper \cite{yemiisgt2021}.
This extended manuscript includes detailed proofs, and additional simulations and discussion\footnote{
\icr{More precisely the additional material includes 
the detailed proof of Theorem \ref{stabilityps} and Lemma \ref{ml} on secondary control, more simulations providing a comparison with other control policies and more details in the analysis (Appendix~\ref{timesep} and Proposition~\ref{passproposition}).}}.}

\yo{\subsubsection*{\ir{Literature review}} \ir{Droop-based schemes have a simple implementation that does not require an additional communication layer,
~\cite{chandorkar1993, pogaku2007, arghirTaouba2018,  yemimo2020, yemijeremy2019, qzhongweiss2011}, however, they cannot provide stability guarantees in a scalable way and do not satisfy passivity properties, which are satisfied by our design. Angle droop control~(\cite{kolluri2017power, majumder2009angle, ritwikMajumder2010, ysunGuerrero2017}), on the other hand, 
does not achieve \ryo{active} power sharing at faster timescales.}
The  traditional  approaches \cite{pogaku2007, chandorkar1993, ysunGuerrero2017,  Rocabert2012} have used the current-error elimination method to provide current limiting capability in their inner (current) control loop. In contrast, our design uses the DC voltage in its inner control loop to implement a power-balancing strategy that improves performance by providing  current limiting capability and a tighter  DC  voltage regulation.

Non-droop alternatives \ir{have been proposed in the literature}, such as full-state feedback policies which use an open loop frequency control set by an internal oscillator, and many of these have plug-and-play capability. Voltage setpoints are sent by a centralized power management system and the inverters regulate their output voltage to this setpoint via a single loop. Examples include e.g.~\cite{sadabadi2016plug, tucci2020scalable, tucci2017voltage, riverso2014plug, tucci2016plug, jeremylestas2020, moradi2010robust}. However, in the period between setpoint updates, power sharing may not be guaranteed due to unexpected load changes, and these schemes consider only voltage control independently of the angle or frequency. By contrast, our scheme does not require voltage setpoints to be broadcast, \ir{and} is able to share power effectively even in the presence of unplanned load changes, \ir{by regulating both the} frequency and voltage simultaneously.
Another design was proposed \ir{in \cite{strehle2019port}} using a proportional controller in a port-Hamiltonian framework. \ir{This controller also} relies on the broadcast of accurate voltage setpoints and open loop frequency control is used. Furthermore, 
\ir{the aforementioned}
non-droop schemes are incompatible with double loop architectures, \ir{in the sense that they do not satisfy the passivity properties they rely upon when double loop control policies are introduced}. \ir{It should be noted} that single loop designs may not guarantee current limiting capabilities in the inverters, \ir{and the} usual industry practice is to 
\ir{achieve this via}
the current reference of the inner current loop.
Other recent designs exist which focus exclusively on the angle / frequency control, such as hybrid angle droop control \cite{tayyebi2020almost}, \cite{jouini2021inverse}. However, the setting differs considerably \ir{from the one considered in this paper} as voltage regulation is a key aim in the problem we consider.}

	\subsubsection*{Paper Contributions}
\yo{\ir{This paper addresses the problem \yl{of} control design for grid-forming inverters such that the following objectives are \yl{satisfied}: voltage/frequency regulation, \ryo{active} power sharing, current limiting capabilities, stability guarantees with plug-and-play operation. Its main contributions are summarized below:}
	\begin{enumerate}
	\item
	We propose a control architecture for frequency and voltage control which employs the inverter output current and angle to improve performance at fast time-scales. Furthermore, we ensure plug-and-play capability by satisfying a decentralized passivity condition.	
	\item Our control policy leads to a distributed secondary controller for which we provide an analytical stability result at slower time-scales with line conductances taken into account.
	\item  We propose an improved internal double loop structure that uses the DC voltage in the feedback control policy to implement a power-balancing strategy. Our policy provides current limiting capability and improved  DC  voltage regulation.
\end{enumerate}
\ir{Furthermore, a} case study using simulations
	with detailed  inverter models is used to demonstrate the desirable performance of the proposed controllers on an inverter-based microgrid. 
}

	\subsubsection*{Paper outline}
	The remainder of the paper is organised as follows. 	In section~\ref{model} we  present the microgrid  model.  
In section~\ref{PFC-a} we describe the frequency and voltage control schemes. A secondary control policy is proposed in section~\ref{powersharing}. Finally, simulation results are given in section~\ref{results}  and conclusions in
	 section~\ref{concl}.

	\subsubsection*{Notation}
	Let $\mathbb{R}_{\geq0}=\{x\in\mathbb{R}|x\geq0\}$,  $\mathbb{R}_{>0}=\{x\in\mathbb{R}|x>0\}$, and $\mathbb{S}=(-\frac{\pi}{2}, \frac{\pi}{2})$.
	We denote $\mathbf{1}_n$ ($\mathbf{0}_n$) the n-dimensional column vector of ones (zeros), $\mathbf{I}_{n}$ is the	identity matrix of size $n$, and $\mathbf{I}$ is used whenever dimension is clear from the  context.
	$\mathbf{0}_{n\times m}$ denotes an $n\times m$ zero matrix, and  $\mathbf{0}$ is used whenever dimension can be deduced from the  context.
	Let $\scriptsize{\mathbf{e}=[1~~0]^{\top}}$,
	$\scriptsize{\mathbf{e}_1=[0~~1]^{\top}}$, $\mathbf{e}_2=\scriptsize{\begin{bmatrix}0&1\\0&0 \end{bmatrix}}$, $J=\scriptsize{\begin{bmatrix}0&1\\-1&0 \end{bmatrix}}$, and $\mathbf{j}=\sqrt{-1}$.
	Let $x=\text{col}(x_1,\ldots,x_n) \in \mathbb{R}^{n}$ denote a column vector with entries $x_j \in \mathbb{R}$,
	and whenever clear from context we use the notation      $x=\text{col}(x_j)\in \mathbb{R}^{n}$. We denote $\text{diag}(a_j)\in\mathbb{R}^{n\times n}$,
	a 
{diagonal} matrix with diagonal entries $a_j$, $\text{blkdiag}(A_j)$ is a block diagonal  matrix with matrix  entries $A_j\in\mathbb{R}^{n\times n}$.
	The Kronecker product is denoted by $\otimes$, and  {for a matrix $A\in\mathbb{R}^{m\times n}$ we  denote its induced $2$-norm by $\|A\|_2$.} {For a Hermitian matrix ${G}\in\mathbb{C}^{n\times n}$ we denote its smallest eigenvalue by $\underline{\lambda}({G})$.}
	
We use the Park transformation 
to transform a balanced three-phase AC signal into its direct-quadrature components. The vector of such quantities at a bus $j$ in the local reference frame is found by using the local frequency $\omega_j(t)$ in the transformation, and we refer to this by the lower-case $dq$ subscript. Similarly, quantities in the common reference frame are found by using a constant common frequency $\omega_{0}$ in the transformation, and such quantities are referred to by the upper-case subscript $DQ$. The relationship between quantities in the $dq$ and $DQ$ frames is {given by:} 
	\begin{equation}	
	\begin{split}
	\label{alg4}
	x_{DQ}(t)=&{T}(\delta(t))x_{dq}(t),\\
	T(\delta(t)) =&{\begin{bmatrix}
	\cos \delta(t) &-\sin \delta(t)\\ \sin \delta(t) & \cos\delta(t)
	\end{bmatrix}}, \hspace{6mm}
	\dot\delta(t)=\omega(t)-\omega_{0},
	\end{split}
	\end{equation}
	where $\delta(t)\in\mathbb{S}$ is
	the angle between the  $dq$ and  $DQ$ reference frames.
 	$T(\delta(t))$ is a rotation matrix that satisfies the
	properties: \mbox{$T^{-1}(\delta(t))=T^{\top}(\delta(t))$}, \mbox{$\frac{\partial T(\delta(t))}{\partial\delta(t)}=J^{\top}T(\delta(t))$}.
	The time argument $t$ will often be omitted in the text for convenience in the presentation.

\section{Models and Preliminaries}\label{model}

\begin{figure}[t!]
	\centering
	\includegraphics[width=1\linewidth]{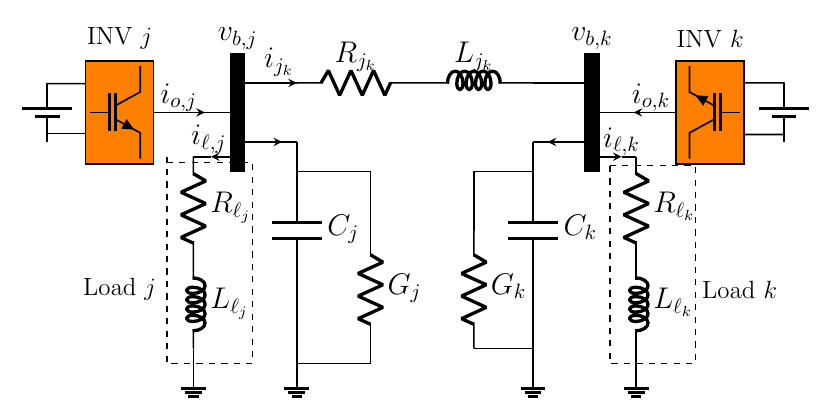}
		\vspace{-5.mm}
	\caption{Example of two inverters interconnected with a transmission line represented with a lumped $\pi$-model}.
	\vspace{-5mm}
	\label{lineloadinv}
\end{figure}

\subsection{Network model}\label{netw}
We describe the network model by a graph $(N,E)$ where $N=\{1, 2, \ldots, |N|\}$ is the set of buses, and $E\subseteq N\times N$ is the set of edges (power lines). The grid-forming inverters and loads are connected at the respective buses.
The entries of the incidence matrix $\mathcal{B} \in\mathbb{R}^{|N|\times |E|}$ are defined as  $\mathcal{B}_{jz}=1$ if bus $j$ is the source of edge $z$ and $\mathcal{B}_{jz}=-1$ if bus $j$ is the sink of edge $z$, with all other elements being zero.
 $\mathcal{L}=\mathcal{B}\mathcal{B}^{\top}\in\mathbb{R}^{|N|\times |N|}$ is the Laplacian matrix {of the graph}.
To present the physical model of the network which consists of the power lines and loads, we make the following assumption:
\begin{assump}
	All the power lines  are  symmetric three-phase lines, and the loads are balanced three-phase loads.
\end{assump}
Consider  the $\pi$-model of a line connecting  a bus $j\in N$ to a bus $k\in N$ with resistance and inductance  $R_{jk}, L_{jk} \in\mathbb{R}_{>0}$ and shunt capacitance and conductance $C_{j}, G_{j} \in\mathbb{R}_{>0}$ at corresponding buses; and a resistive-inductive (constant impedance) load with parameters  $R_{\ell_j}, L_{\ell_j} \in\mathbb{R}_{>0}$ (e.g. see Fig.~\ref{lineloadinv}).
The line and load models in the $DQ$ coordinates, rotating at the common reference frame frequency $\omega_{0}$, are easily derived by applying the Park {transformation} to the fundamental equations of the passive components, resulting in
 the line (equations~\eqref{net}) and  resistive-inductive load (equation~\eqref{netc}) models as follows:
\begin{equation}\small
	\label{net}
	\begin{split}
		C_l \dot V_{bDQ}&=(-G_l+\omega_0C_l\mathbf{J}) V_{bDQ}+ I_{oDQ} -I_{\ell DQ} - \mathbf{B} I_{lDQ}\\
		L_l \dot I_{lDQ}&=(-R_l+\omega_0L_l\mathbf{J})I_{lDQ}+\mathbf{B}^{\top}V_{bDQ}
	\end{split}
\end{equation}
\begin{equation}
	\label{netc}
	\hspace{-21mm}L_{\ell}\dot I_{\ell DQ}=(-R_{\ell}+\omega_0L_{\ell}\mathbf{J})I_{\ell DQ}+V_{bDQ}
\end{equation}
where $R_{l}=(\text{diag}(R_{jk})\otimes\mathbf{I}_2)$,
$L_{l}=(\text{diag}(L_{jk})\otimes\mathbf{I}_2)\in \mathbb{R}^{2|E|\times2|E|}$;
$C_{l}=(\text{diag}(C_{j})\otimes\mathbf{I}_2)$,
$G_{l}=(\text{diag}(G_{j})\otimes\mathbf{I}_2)$,
$R_{\ell}=(\text{diag}(R_{\ell_j})\otimes\mathbf{I}_2)$,
$L_{\ell}=(\text{diag}(L_{\ell_j})\otimes\mathbf{I}_2)$,
$\mathbf{J}=\text{blkdiag}(J)\in \mathbb{R}^{2|N|\times2|N|}$;
$\mathbf{B}=(\mathcal{B}\otimes\mathbf{I}_2)\in \mathbb{R}^{2|N|\times2|E|}$;
$I_{lDQ}=\text{col}(i_{DQ,jk})\in \mathbb{R}^{2|E|}$;
$V_{bDQ}=\text{col}(v_{bDQ,j})$,
$I_{oDQ}=\text{col}(i_{oDQ,j})$,
$I_{\ell DQ}=\text{col}(i_{\ell DQ,j})\in \mathbb{R}^{2|N|}$.
The  line current $i_{DQ,jk}=[i_{D,jk}~i_{Q,jk}]^{\top}$ takes values in $\mathbb{R}^2$; the injected current $i_{oDQ,j}=[i_{oD,j}~i_{oQ,j}]^{\top}$  at a bus $j\in N$  takes values in $\mathbb{R}^2$;
$i_{DQ,jk}$,  $i_{oDQ,j}$,   $i_{\ell DQ,j}$, $v_{bDQ,j}$ are two-dimensional vectors that include the $DQ$ components of the line current, injected current, load current and bus voltage respectively.

	\subsection{\yo{Grid-forming inverter model in common reference frame}}
	\label{gfim}
	Fig.~\ref{inverter} shows the schematic of a three-phase DC/AC grid-forming inverter.
	The DC circuit consists of a controllable current source $i_{dc,j}$ which takes values  in \ryo{$\mathbb{R}$}, a conductance $G_{dc_j}\in\mathbb{R}_{>0}$ and capacitance $C_{dc_j}\in\mathbb{R}_{>0}$. The AC circuit has an $LCL$ filter with  inductances
		$L_{f_j}, L_{c_j}\in\mathbb{R}_{>0}$, resistances $R_{f_j}, R_{c_j} \in\mathbb{R}_{>0}$, a conductance $G_{s_j} \in\mathbb{R}_{>0}$, and a shunt capacitance  $C_{f_j}\in\mathbb{R}_{>0}$.  $m_j$ is a balanced three-phase sinusoidal  control input signal, {used for the pulse-width modulation (PWM) that actuates the electronic switches}.
To present the physical model of the inverter, the following assumptions are made:
\begin{assump}\label{ass-average}
\begin{itemize}
	{
	\item The switching frequency is very high compared to the microgrid frequency and the filter sufficiently attenuates the harmonics.
	\item The power generated on the DC-side is transferred to the AC-side without switching losses.
}
\end{itemize}
\end{assump}
		From the assumptions above, the following can be used $i_{x,j}=\frac{1}{2}i_j^{\top}m_j$ and $v_{x,j}=\frac{1}{2}v_{dc,j}m_j$~\cite{yemimo2020, arghirTaouba2018}. These then allow to
		consider
	 the inverter model, formulated in the local ($dq$) reference frame, rotating with the local  frequency $\omega_j$  {(as in e.g. \cite{yemimo2020})}.
		\yo{The interconnection of multiple inverters usually results in multiple local $dq$ reference frames, which is due to the different local frequencies $\omega_j$ of the individual inverters.
\ir{This justifies
modeling} the inverters in a common reference frame.
		 In particular, we  interconnect the inverters with the network~\eqref{net} by transforming the inverter $dq$ model (as in \cite{yemimo2020})	to the common ($DQ$) reference frame, rotating at \ir{a} constant common frequency $\omega_{0}$.}
		Let the  variable $m_{dq,j}$ ($m_{DQ,j}$) denote
		the two-dimensional
		$dq$ ($DQ$) coordinates of the control input variable $m_j$ of inverter $j$, and $m_{dq}=\text{col}(m_{dq,j})$, $m_{DQ}=\text{col}(m_{DQ,j})$.
		Using~\eqref{alg4}, the representation of the $dq$ model
		 in the $DQ$ frame is compactly given for the multi-inverter model,
		 with	${m}_{DQ}=\mathbf{T}(\delta){m}_{dq}$,
		 $\text{col}(i_{xDQ,j})=\frac{1}{2}\mathbf{I}^{\top}_{DQ}m_{DQ}$, $\text{col}(v_{xDQ,j})=\frac{1}{2}\mathbf{V}_{dc}m_{DQ}$, as
		\begin{figure}[t!]
			\centering
			\includegraphics[width=1\linewidth]{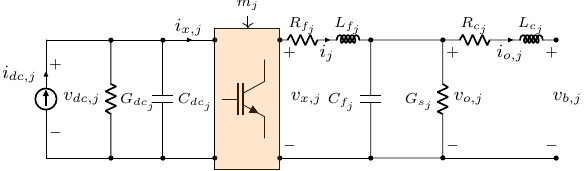}
				\vspace{-6.mm}
			\caption{Grid-forming inverter circuit diagram.}
			\vspace{-4mm}
			\label{inverter}
		\end{figure}	
		\begin{subequations}
		\label{crfinv2}
		\begin{align}
		\label{crfinv2a}
		\dot \delta =& \omega- \omega_0\mathbf{1}_n\\
		\label{crfinv2b}
		C_{dc}\dot V_{dc}=&-G_{dc}V_{dc}+I_{dc}-\frac{1}{2}\mathbf{I}^{\top}_{DQ}m_{DQ}\\
		L_{f}\dot  I_{DQ}=&(-R_{f}+\omega_0 L_{f}\mathbf{J})I_{DQ}+\frac{1}{2}\mathbf{V}_{dc}m_{DQ}-V_{oDQ}\\
		C_{f}\dot V_{oDQ}=&(-G_{s}+\omega_0 C_{f}\mathbf{J})V_{oDQ}+I_{DQ}-I_{oDQ}\\
		L_{c} \dot I_{oDQ}=&(-R_{c}+\omega_0 L_{c}\mathbf{J})I_{oDQ}+V_{oDQ}-V_{bDQ}
		\end{align}
		\end{subequations}
	where
	$\omega=\text{col}(\omega_{j})$,
	$I_{dc}=\text{col}(i_{dc,j})$,
	$V_{dc}=\text{col}(v_{dc,j}) \in \mathbb{R}^{|N|}$;
	$\delta=\text{col}(\delta_{j})\in \mathbb{S}^{|N|}$;
	$I_{DQ}=\text{col}(i_{DQ,j})$,
	$V_{bDQ}=\text{col}(v_{bDQ,j})$,
	$I_{oDQ}=\text{col}(i_{oDQ,j})$,
	$m_{dq}=\text{col}(m_{dq,j})
	\in \mathbb{R}^{2|N|}$;
	$C_{dc}=\text{diag}(C_{dc_j})$, $G_{dc}=\text{diag}(G_{dc_j})\in \mathbb{R}^{|N|\times|N|}$;
	$R_{f}=(\text{diag}(R_{f_j})\otimes\mathbf{I}_2)$,
	$R_{c}=(\text{diag}(R_{c_j})\otimes\mathbf{I}_2)$,
	$L_{f}=(\text{diag}(L_{f_j})\otimes\mathbf{I}_2)$,
	$L_{c}=(\text{diag}(L_{c_j})\otimes\mathbf{I}_2)$,
	$C_{f}=(\text{diag}(C_{f_j})\otimes\mathbf{I}_2)$,
	$G_{s}=(\text{diag}(G_{s_j})\otimes\mathbf{I}_2)$,
	$\mathbf{V}_{dc}=(\text{diag}(v_{dc,j})\otimes\mathbf{I}_2)$,
	$\mathbf{T}(\delta)=\text{blkdiag}(T(\delta_j))\in \mathbb{R}^{2|N|\times2|N|}$;
	$\mathbf{I}_{DQ}=(\text{diag}(i_{D,j})\otimes\mathbf{{e}}+\text{diag}(i_{Q,j})\otimes\mathbf{{e}_1}) \in \mathbb{R}^{2|N|\times|N|}$;
	${m}_{DQ}=\text{col}({m}_{DQ,j}) \in\mathbb{R}^{2|N|}$,
	$n=|N|$.
	$i_{DQ,j}$, $i_{oDQ,j}$, $v_{DQ,j}$, $v_{oDQ,j}$
	are two-dimensional vectors that include the $DQ$ components of the inverter currents and voltages respectively.


\subsection{Passivity}\label{passivity}
We review {in this section} the notion of passivity
	and its use to   guarantee microgrid stability in a decentralized way.
	We use the notions of passivity and strict passivity as defined in~\cite[Definition $6.3$]{khalil2014},
	but with \irr{the state, input and output} $x,u,\yo{y}$ replaced by  the deviations $x-x^*$, $u-u^*$, \yo{$y-y^*$} respectively, \irr{where $x^\ast, u^\ast, y^\ast$ are values at an equilibrium point.} \irr{Furthermore, we say a system is (strictly) passive about an equilibrium point $x^\ast, u^\ast$ if the condition on the storage function in the passivity definition holds for all values of $x, u$ in some neighbourhoods of $x^\ast, u^\ast$ respectively.}
	The negative feedback interconnection of two passive systems is stable and passive~\cite{khalil2014}. Hence, by representing the microgrid as a negative feedback interconnection of two passive subsystems, its closed-loop stability  can be guaranteed in a decentralized manner. To this end, we decompose the microgrid into two subsystems, namely the network, \irr{which includes the line dynamics, and  the inverter dynamics, as illustrated} in Fig.~\ref{fdback}.
	The network dynamics ~\eqref{net},~\eqref{netc}
\irr{have as} output $V_{bDQ}$ and input $I_{oDQ}$, while the inverter dynamics \eqref{crfinv2}
\irr{have as} output $I_{oDQ}$ and input $-V_{bDQ}$.
	By exploiting the passivity property of the network  when this is represented in $DQ$ coordinates, stated for completeness in Theorem~\ref{thrmnet}, it can be shown that Assumption~\ref{asspas} is a sufficient {\it decentralized} condition for stability, as stated in Theorem~\ref{thrmclp} (see e.g.~\cite{jeremylestas2020} where a more advanced line model is also used).
	The proofs of  Theorem~\ref{thrmnet},~\ref{thrmclp} are analogous to those in \ryo{{e.g.}~\cite{jeremylestas2020}.}

\begin{thrm}[Passivity of network in $DQ$ frame] \label{thrmnet}
	Suppose there exist an equilibrium point $x^*_N=[I^{*\top}_{l DQ},I^{*\top}_{\ell DQ},V^{*\top}_{bDQ}]^\top$, with 
input ${u}^*=I^*_{oDQ}$ \yo{and output $y^*= V^*_{bDQ}$,} then the network~\eqref{net},~\eqref{netc} with input ${u}=I_{oDQ}$ and output ${y}= V_{bDQ}$ is strictly passive about the equilibrium\footnote{{It should be noted that since the network model $\eqref{net},~\eqref{netc}$ that includes the line dynamics is linear, the passivity property in Theorem \ref{thrmnet} holds about any equilibrium point, \irr{and also for any deviation from the equilibrium point}.
}} $(x^*_N,u^*)$.
\end{thrm}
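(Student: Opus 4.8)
The plan is to establish strict passivity by constructing a storage function from the physical energy stored in the network's reactive elements, expressed in deviation coordinates. First I would introduce the deviations $\tilde V_{bDQ} = V_{bDQ} - V^*_{bDQ}$, $\tilde I_{lDQ} = I_{lDQ} - I^*_{lDQ}$, $\tilde I_{\ell DQ} = I_{\ell DQ} - I^*_{\ell DQ}$, together with the deviation input $\tilde u = I_{oDQ} - I^*_{oDQ}$ and output $\tilde y = \tilde V_{bDQ}$. Because~\eqref{net},~\eqref{netc} are linear, the constant terms cancel when the equilibrium equations are subtracted, so the deviation dynamics take exactly the same form as the original equations with each variable replaced by its tilde counterpart. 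I would then take the candidate storage function
\begin{equation*}
S = \tfrac12 \tilde V_{bDQ}^\top C_l \tilde V_{bDQ} + \tfrac12 \tilde I_{lDQ}^\top L_l \tilde I_{lDQ} + \tfrac12 \tilde I_{\ell DQ}^\top L_\ell \tilde I_{\ell DQ},
\end{equation*}
which is positive definite in the state since $C_l, L_l, L_\ell$ are positive definite diagonal matrices.

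The core of the argument is the computation of $\dot S$ along the deviation dynamics. Differentiating and substituting, I expect three kinds of terms. The rotational terms generated by $\omega_0 C_l \mathbf{J}$, $\omega_0 L_l \mathbf{J}$ and $\omega_0 L_\ell \mathbf{J}$ each contribute a quadratic form in a single variable; since $\mathbf{J}$ is skew-symmetric and commutes with the block-scalar matrices $C_l, L_l, L_\ell$, each such product is itself skew-symmetric and its associated quadratic form vanishes. The coupling between $\tilde V_{bDQ}$ and $\tilde I_{lDQ}$ enters through $-\mathbf{B}\tilde I_{lDQ}$ in the voltage balance and $\mathbf{B}^\top \tilde V_{bDQ}$ in the line-current balance; these are transposes of one another and cancel exactly, and likewise the load coupling $-\tilde I_{\ell DQ}$ against $\tilde V_{bDQ}$ cancels. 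What remains is
\begin{equation*}
\dot S = \tilde y^\top \tilde u - \tilde V_{bDQ}^\top G_l \tilde V_{bDQ} - \tilde I_{lDQ}^\top R_l \tilde I_{lDQ} - \tilde I_{\ell DQ}^\top R_\ell \tilde I_{\ell DQ}.
\end{equation*}

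Since $G_l, R_l, R_\ell$ are positive definite, the last three terms define a positive definite function $\psi$ of the full state $\tilde x_N = \mathrm{col}(\tilde I_{lDQ}, \tilde I_{\ell DQ}, \tilde V_{bDQ})$, so that $\dot S \le \tilde u^\top \tilde y - \psi(\tilde x_N)$, which is precisely strict passivity in the sense of~\cite{khalil2014} about $(x^*_N, u^*)$. The main thing to verify carefully is the exact cancellation of the rotational and coupling terms: this rests on the skew-symmetry of $\mathbf{J}$, the block-scalar (hence commuting with $\mathbf{J}$) structure of the reactance matrices, and the antisymmetric way the incidence matrix $\mathbf{B}$ appears in the two current balance equations. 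Positivity of the shunt conductances $G_l$ is what upgrades passivity of the voltage channel to strictness, and with $G_l, R_l, R_\ell$ all positive definite the dissipation covers every state component. Finally, as noted in the footnote, the linearity of~\eqref{net},~\eqref{netc} makes the conclusion independent of the particular equilibrium.
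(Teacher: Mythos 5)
Your proposal is correct and is precisely the standard incremental-energy argument that the paper invokes implicitly (it gives no proof of its own, only citing \cite{jeremylestas2020, jeremyyemi2019}, where the same storage function $S=\tfrac12\tilde V_{bDQ}^\top C_l\tilde V_{bDQ}+\tfrac12\tilde I_{lDQ}^\top L_l\tilde I_{lDQ}+\tfrac12\tilde I_{\ell DQ}^\top L_\ell\tilde I_{\ell DQ}$ is used). All the key cancellations you flag do hold --- $C_l\mathbf{J}$, $L_l\mathbf{J}$, $L_\ell\mathbf{J}$ are skew-symmetric by the Kronecker structure, and the $\mathbf{B}$/$\mathbf{B}^\top$ and load coupling terms cancel in transposed pairs --- leaving $\dot S=\tilde u^\top\tilde y-\tilde V_{bDQ}^\top G_l\tilde V_{bDQ}-\tilde I_{lDQ}^\top R_l\tilde I_{lDQ}-\tilde I_{\ell DQ}^\top R_\ell\tilde I_{\ell DQ}$, which with $G_l,R_l,R_\ell\succ0$ gives strict passivity in the sense of \cite[Definition 6.3]{khalil2014}.
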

	\begin{rmk}	
		We have considered constant impedance  loads in the network which are known to be passive.
Constant power loads can be  nonpassive 
due to  their negative incremental resistance~\cite{ashourloo2013stabilization}.
		In the latter case, the network is guaranteed to be passive under an appropriate condition as derived in~\cite{strehle2019port}, that is satisfied 
when a sufficient number of constant impedance loads is present.
	\end{rmk}

\begin{assump}\label{asspas}
	Each
	inverter in the system~\eqref{crfinv2}
	with state vector $x=[\delta^\top, V_{dc}^\top, I^\top_{DQ}, V^\top_{oDQ},$ $ I_{oDQ}]^\top$, input
	$u=-V_{bDQ}$ and output $y=I_{oDQ}$	
	satisfies the {strict} passivity property {in}
	\cite[Definition $6.3$]{khalil2014}
	about an equilibrium point $(x^*,u^*)$.
\end{assump}

\begin{thrm}[Closed-loop stability]\label{thrmclp}
	Suppose there exists an equilibrium point 	$x^*_m=(x^*_N,x^*)$ of the interconnected inverter dynamics~\eqref{crfinv2}
	and the network~\eqref{net},~\eqref{netc}, for which the inverter dynamics satisfy Assumption~\ref{asspas} for all $j\in N$. Then such an  equilibrium point is asymptotically
stable.
\end{thrm}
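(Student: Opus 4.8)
The plan is to realise the microgrid as the negative feedback interconnection of the network and inverter subsystems depicted in Fig.~\ref{fdback}, and to invoke the classical result that such an interconnection of two strictly passive systems is asymptotically stable. The two ingredients are already available: Theorem~\ref{thrmnet} furnishes a storage function $S_N\geq 0$, positive definite about $x^*_N$, satisfying $\dot S_N \leq (I_{oDQ}-I^*_{oDQ})^\top(V_{bDQ}-V^*_{bDQ})-\psi_N$ with $\psi_N$ positive definite in $x_N-x^*_N$; and Assumption~\ref{asspas} provides, for each inverter, a storage function that is positive definite about its equilibrium and satisfies the strict passivity inequality with input $-V_{bDQ}$ and output $I_{oDQ}$. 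Summing these over $j\in N$ yields an aggregate inverter storage function $S_{\mathrm{inv}}$ with dissipation $\psi_{\mathrm{inv}}$ that is positive definite in the full inverter state deviation and supply rate $-(V_{bDQ}-V^*_{bDQ})^\top(I_{oDQ}-I^*_{oDQ})$.

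First I would take $V=S_N+S_{\mathrm{inv}}$ as a candidate Lyapunov function, which is positive definite about $x^*_m=(x^*_N,x^*)$ as a sum of positive definite terms. Differentiating along the closed-loop trajectories and adding the network and inverter dissipation inequalities, I would substitute the interconnection relations, namely that the network input is the inverter output $I_{oDQ}$ while the inverter input is $-V_{bDQ}$. The two cross supply-rate terms that result, $(I_{oDQ}-I^*_{oDQ})^\top(V_{bDQ}-V^*_{bDQ})$ and its negative, cancel identically, leaving $\dot V\leq -\psi_N-\psi_{\mathrm{inv}}$. Since both dissipation terms are positive definite in the state deviation, $\dot V<0$ for every $x_m\neq x^*_m$, and Lyapunov's theorem delivers asymptotic stability. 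Because strict passivity in the sense of \cite[Definition~6.3]{khalil2014} is state-strict rather than merely output-strict, no appeal to LaSalle's invariance principle is needed to close the argument.

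The interconnection algebra is routine, so the points that genuinely require care are, first, verifying that the two subsystems share a consistent equilibrium value of the coupling variables $(I^*_{oDQ},V^*_{bDQ})$ — this is guaranteed by the hypothesis that $x^*_m$ is an equilibrium of the coupled dynamics~\eqref{crfinv2},~\eqref{net},~\eqref{netc} — and, second, ensuring that the composite storage function is positive definite about $x^*_m$. The latter is the only subtle point: the angle state $\delta\in\mathbb{S}^{|N|}$ from~\eqref{crfinv2a} does not appear in the passivity output $I_{oDQ}$, so $S_{\mathrm{inv}}$ must be chosen to penalise $\delta-\delta^*$ and render it detectable, which is precisely where the construction mirrors those of~\cite{jeremylestas2020, jeremyyemi2019}. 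I would follow those references to discharge this remaining verification and complete the proof.
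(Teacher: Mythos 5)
Your proposal is correct and follows essentially the same route the paper takes: the paper does not spell out a proof of Theorem~\ref{thrmclp} but explicitly frames the microgrid as the negative feedback interconnection of the passive network (Theorem~\ref{thrmnet}) and the passive inverter dynamics (Assumption~\ref{asspas}), and defers the details to the analogous arguments in~\cite{jeremylestas2020, jeremyyemi2019}, which are exactly the summed-storage-function, cancelling-supply-rate construction you describe. Your identification of the positive definiteness of the composite storage function (in particular the handling of the angle state $\delta$) as the one point requiring care is consistent with how those references close the argument.
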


\begin{rmk}
	The advantage of the stability criterion in Assumption~\ref{asspas} is that it is a 
decentralized condition.
	Since the network is passive in the $DQ$ frame, in the remainder of the paper we aim to passivate the inverter system  via an appropriate control policy. As mentioned in the introduction a distinctive feature of the proposed policy is the double loop architecture that uses the DC voltage in the feedback control loop to implement
	a power-balancing strategy to improve performance,	
	and its ability to incorporate a distributed secondary control schemes for \ryo{active} power sharing.
\end{rmk}

\begin{figure}[t!]
	\centering
	\includegraphics[width=1\linewidth]{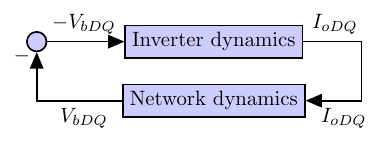}
	\vspace{-5mm}
	\caption{
		Negative feedback connection of inverter and network dynamics.}
	\label{fdback}
	\vspace{-4mm}
\end{figure}

\section{Proposed  Control Schemes}\label{PFC-a}
In this section, we  present a control architecture for frequency and voltage control  that guarantees stability by making the inverters 
to satisfy the  {passivity property in  Assumption~\ref{asspas}} through a local 
design.

\subsection{Proposed frequency control} \label{PFC}
Grid-forming inverters must operate in a synchronized manner despite load  variations and system uncertainties.	
Our aim is to design a decentralized frequency control scheme  \ryo{that} restores the  frequency to  its nominal value after a disturbance, and can be incorporated with a secondary control policy to provide \ryo{active} power sharing capabilities.

%

	To this end, we propose a frequency control scheme
	that can be seen as an improved angle droop policy, which  leads to passivity properties in the $DQ$ frame.
	This scheme takes the inverter output current
	$i_{oD,j}:={\mathbf{e}}^{\top}i_{oDQ,j}$
	(i.e., the first component of  $i_{oDQ,j}$)
	and the		angle $\delta_j$  as feedback  to adapt the frequency as described below:
	\begin{equation}
		\label{derr8-6}
		\omega_j= \omega_0-k_{p,j}{\mathbf{e}}^{\top}i_{oDQ,j} - k_{I,j} \delta_j +\chi_j
	\end{equation}
	where $k_{p,j}, k_{I,j}\in\mathbb{R}_{>0}$ are the  droop and damping gains respectively,  and $\chi_j\in\mathbb{R}$ are set-points.
	Let $I_{oDQ}=\text{col}(i_{oDQ,j})\in\mathbb{R}^{2|N|}$, $k_{p}=\text{diag}(k_{p,j}), k_{I}=\text{diag}(k_{I,j}) \in\mathbb{R}^{|N|\times |N|}_{>0}$, $\underline{\mathbf{e}}=(\mathbf{I}_n\otimes\mathbf{e})\in\mathbb{R}^{2|N|\times |N|}$,  $\chi=\text{col}(\chi_j)\in\mathbb{R}^{|N|}$, $n=|N|$. The compact form of \eqref{derr8-6} for multiple inverters is given by
	\begin{equation}
		\label{der8}
		\omega= \omega_0\mathbf{1}_{n}-k_{p}\underline{\mathbf{e}}^{\top}I_{oDQ} - k_{I} \delta +\chi.
	\end{equation}

		Considering~\eqref{der8} with the 
\ryo{angle} {$\delta$} dynamics~\eqref{crfinv2a}  results in an improved version of angle droop
		where the term $k_{I}\delta$ provides the necessary damping of the angle \ryo{dynamics,}
		which helps the inverter model~\eqref{der8} to satisfy a passivity property in the $DQ$ frame (discussed in sections~\ref{main},~\ref{resultspass}).
		As it will be discussed in section~\ref{powersharing}, the  current $I_{oD}$ allows to achieve \ryo{active} power sharing by appropriately adjusting  $\chi$, and the choice of $k_p$ sets the power sharing ratio.
		The parameters $\chi$ are assumed to be transmitted to each	inverter by a high-level control policy on a slower timescale, i.e., typically a secondary control, or energy management system \ic{as the effect of the clock drifts \icc{is small} at faster timescales.}
		 $\chi$ can provide additional capability to correct clock drifts which may arise due to clock inaccuracies as discussed in~\cite{kolluri2017power}.
		 \ryo{Also in this case the updating of  $\chi$ can involve the use \ic{of GPS}\footnote{Global Positioning System.} \cite{tucci2020scalable}, \cite{singh2011applications}.}	
		Furthermore, \eqref{der8} with
		~\eqref{crfinv2a}
		ensures that
		the equilibrium frequency of each inverter is equal to the common constant frequency, i.e.
  \mbox{$\omega^*=\omega_{0}\mathds{1}_n$}  (section~\ref{results}).

		It is informative to compare our proposed controller~\eqref{der8} with
		\eqref{crfinv2a} to \ryo{the} traditional angle droop control~\cite{kolluri2017power, majumder2009angle, ritwikMajumder2010, ysunGuerrero2017}.
		One of the advantages of our proposed control scheme is scalability, which is achieved via satisfying an appropriate passivity property as mentioned above. 	
		The use of $I_{oD}$ in~\eqref{der8} helps to avoid the nonlinearity associated with the active power relation used in traditional droop control schemes~\cite{chandorkar1993, kolluri2017power, majumder2009angle, ritwikMajumder2010, ysunGuerrero2017}.
		A further benefit is that~\eqref{der8}
		 with
		~\eqref{crfinv2a} provides inertia and damping similar to the dynamic behaviour of the SM, which is not achievable with traditional angle droop control~\cite{kolluri2017power, majumder2009angle, ritwikMajumder2010, ysunGuerrero2017}.
		To show this,	substitute~\eqref{der8} into  the \ryo{angle $\delta$} 
		 dynamics~\eqref{crfinv2a}, and
		expressed in \ryo{a} more insightful form gives
	\begin{equation}
	\label{iner1}
	M\dot\delta=  - D \delta -\underline{\mathbf{e}}^{\top}I_{oDQ} +M\chi_j,
\end{equation}
where  $M=k^{-1}_{p}$,  $D=k^{-1}_{p}k_{I}$.
Equation~\eqref{iner1} is analogous to a swing equation, with the frequency replaced by the angle $\delta$.
$M$ corresponds to the inertia, and $D$ the damping coefficient. The droop gain $k_p$  can be chosen to shape the desired (virtual) inertia  $M$, and $k_{I}$ provides an additional degree of freedom to design $D$.
This is an improvement compared to the traditional angle droop control~\cite{kolluri2017power, majumder2009angle, ritwikMajumder2010, ysunGuerrero2017} where the inertia $M$ is zero and only $k_p$  is available to design $D$.

		\subsection{DC voltage regulation}
		\label{dcreg}
		It is desirable that the DC voltage is regulated to a predefined setpoint.
		Hence we present a DC voltage proportional-integral (PI) controller that achieves this as follows:
	\ryo{\begin{equation}	
		\label{idc2}
		\begin{split}
			\dot \zeta=&V_{dc}-V_{dc,r}\\
			I_{dc}=&-\Lambda_{P}(V_{dc}-V_{dc,r})-\Lambda_{I}\zeta,	
		\end{split}
	\end{equation}}
		where 		$\zeta=\text{col}(\zeta_{j})\in\mathbb{R}^{|N|}$ is the integrator state,
		$\Lambda_{P}=\text{diag}(\Lambda_{P,j})$,
		$\Lambda_{I}=\text{diag}(\Lambda_{I,j})\in\mathbb{R}^{|N|\times|N|}_{>0}$ are the DC proportional and integral gains respectively, $V_{dc,r}=\mathbf{1}_n v_{dc,r},\,\ryo{V_{dc,r}\in\mathbb{R}^{|N|}_{>0}}$, and  $v_{dc,r}\in\mathbb{R}_{>0}$ denotes the DC voltage setpoint.

	\subsection{Inverter output voltage regulation}\label{acreg}
		Grid-forming inverters are required to regulate the voltage of the grid they form, hence they need to have  voltage regulation capability.
This is achieved in our proposed scheme via the control signal $m_{DQ,j}$ in~\eqref{crfinv2}.
		\yo{In particular, we use the double (outer and inner) loop design where the inner loop is faster than the outer one.
		One of the distinctive features of our scheme is that it uses the DC voltage
		in the inner control loop and incorporates the angles while providing voltage control.
	 Our control scheme  \ir{described in detail below} \ic{(also illustrated with block diagrams in section \ref{sec:impl}, Fig. \ref{blockfig}, \ref{twoloops}, \ref{dcloop})}.}

			First, \yo{a} reference current $i_{DQ,j}^{r}$
			is generated by the outer voltage loop  by means of PI control acting on the voltage deviation $v_{oDQ,j}-{T}(\delta_j)\mathbf{{e}}V_n-n_{q,j}\mathbf{e}_2i_{oDQ,j}$, where $n_{q,j}, V_n\in\mathbb{R}_{>0}$ are the voltage droop gain and  nominal voltage respectively.
			We use $-n_{q,j}\mathbf{e}_2i_{oDQ,j}=-n_{q,j}i_{oQ,j}$ to adjust the direct-coordinate of ${T}(\delta_j)\mathbf{{e}}V_n$, similar to the conventional reactive power based voltage droop control in~\cite{pogaku2007, chandorkar1993}. \yo{\ir{Hence the} voltage loop incorporates the angle.}
		Then, the inner control loop generates  $m_{DQ,j}$ by means of PI control acting on the power imbalance $i_{DQ,j}v_{dc,r}-i^r_{DQ,j}v_{dc,j}$.
		 		The voltage is therefore described by
		\begin{subequations}
		\label{acv3}
		\begin{align}
			\label{acv3-a}
		\dot \beta_{DQ,j}=&v_{oDQ,j}-{T}(\delta_j)\mathbf{{e}}V_n-n_{q,j}\mathbf{e}_2i_{oDQ,j}\\
		\notag
		i^r_{DQ,j}=&-c_{p,j}\left(v_{oDQ,j}-{T}(\delta_j)\mathbf{{e}}V_n-n_{q,j}\mathbf{e}_2i_{oDQ,j}\right)
		\\ \label{acv3-b}&-c_{I,j}\beta_{DQ,j}\\
		\label{acv3-c}
		\dot\xi_{DQ,j}=&i_{DQ,j}v_{dc,r}-i^r_{DQ,j}v_{dc,j}\\
		\label{acv3-d}
		\hspace{-.8mm}m_{DQ,j}=&-\lambda_{P,j}(i_{DQ,j}v_{dc,r}-i^r_{DQ,j}v_{dc,j})-\lambda_{I,j}\xi_{DQ,j}
		\end{align}
		\end{subequations}	
where $\beta_{DQ,j}$, $\xi_{DQ,j}$ 	are two-dimensional vectors that include the $DQ$ components of the the respective integrator states of the voltage and \yo{inner} control loops; $c_{p,j}, \lambda_{P,j}, c_{I,j}, \lambda_{I,j}\in\mathbb{R}_{>0}$ are the respective control loops proportional and integral gains.
	\begin{rmk}\label{rmk-volreg-motn}
	\ryo{We note that the use of $i_{DQ,j}v_{dc,r} - i_{DQ,j}^r v_{dc}$  enhances the passivity property of the inverter system. This is motivated \il{by \ic{a passivity} analysis of system \eqref{crfinv2}, \eqref{der8}, \eqref{idc2},} which we omit here for the readability of the text.}
\end{rmk}

\ryo{~We now  present the compact form  of \eqref{acv3} for multiple inverters.}
Let $\beta_{DQ}=\text{col}(\beta_{DQ,j}), \xi_{DQ}=\text{col}(\xi_{DQ,j}),
{I}^r_{DQ}=\text{col}(i^r_{DQ,j})		\in\mathbb{R}^{2|N|}$;
$c_{p}=(\text{diag}(c_{p,j})\otimes\mathbf{I}_2)$, $\lambda_{P}=(\text{diag}(\lambda_{P,j})\otimes\mathbf{I}_2)$,
$c_{I}=(\text{diag}(c_{I,j})\otimes\mathbf{I}_2)$,
$\lambda_{I}=(\text{diag}(\lambda_{I,j})\otimes\mathbf{I}_2)$,
$\underline{n}_q=\text{blkdiag}(\mathbf{e}_2n_{q,j})\in\mathbb{R}^{2|N|\times 2|N|}_{>0}$;
$\mathbf{I}^r_{DQ}=(\text{diag}(i^r_{D,j})\otimes\mathbf{{e}}+\text{diag}(i^r_{Q,j})\otimes\mathbf{{e}_1}) \in \mathbb{R}^{2|N|\times|N|}$.
The  compact form of the voltage control scheme \eqref{acv3} is given by:		\begin{subequations}	\label{acv1}
	\begin{align}
		\label{acv1a}
		\dot \beta_{DQ}=&V_{oDQ}-\mathbf{T}(\delta)\mathbf{\underline{e}}V_n-\underline{n}_qI_{oDQ}\\
			\label{acv1b}
		I^r_{DQ}=&-c_{p}(V_{oDQ}-\mathbf{T}(\delta)\mathbf{\underline{e}}V_n-\underline{n}_qI_{oDQ})-c_I\beta_{DQ}\\
			\label{acv1c}
		\dot\xi_{DQ}=&\mathbf{I}_{DQ}V_{dc,r}-\mathbf{I}^r_{DQ}{V}_{dc}\\
			\label{acv1d}
		{m}_{DQ}=&-\lambda_{P}(\mathbf{I}_{DQ}V_{dc,r}-\mathbf{I}^r_{DQ}{V}_{dc})-\lambda_{I}\xi_{DQ}.
	\end{align}
\end{subequations}
	
\yo{Our voltage control policy \eqref{acv1} differs from existing control schemes in \ir{its} implementation and the advantages it offers.
	One of the distinctive features of our scheme is that it uses the DC voltage
	in the inner control loop \eqref{acv1c}-\eqref{acv1d}, and thus a power-balancing strategy is implemented which  improves DC  voltage regulation and offers current limiting capability.
	This differs to the conventional approach \cite{pogaku2007, chandorkar1993, ysunGuerrero2017,  Rocabert2012} where \yl{the} current-error
is eliminated to provide current limiting capability in their inner (current) control loop.
Also, our control architecture incorporates angle dynamics   into its outer  voltage loop \eqref{acv1a}-\eqref{acv1b} while voltage control is achieved, in contrast to
\cite{sadabadi2016plug, tucci2020scalable, tucci2017voltage, riverso2014plug, tucci2016plug, strehle2019port, strehle2021unified, jeremylestas2020} which implement
voltage control independent of the angle or frequency.}
\ryo{Moreover, the} use of $I_{oQ}$ helps to avoid  the nonlinearity associated with  the reactive power relation used in the conventional voltage droop control in~\cite{chandorkar1993, pogaku2007}.

\ryo{As already mentioned above, our voltage control through its power-balancing strategy  offers \ic{a current} limiting capability. Note that in  \eqref{acv1}, the integral action of its inner control loop forces   $\mathbf{I}_{DQ}V_{dc,r}$   to track    $\mathbf{I}^r_{DQ}V_{dc}$.
With  	\eqref{idc2} implemented, the integral action of the inner control loop would force  $\mathbf{I}_{DQ}$  to track $\mathbf{I}^r_{DQ}$.
\il{Since $\mathbf{I}^r_{DQ}$ depends on the voltage deviations that are in general small, $\mathbf{I}^r_{DQ}$ and hence also $\mathbf{I}_{DQ}$ do not have large fluctuations.	It should be noted that this is the} usual industry practice of how double loop control architectures achieve  current limiting capabilities \cite{pogaku2007, Rocabert2012}.}

\subsection{Passivity of inverter system}
\label{main}

The passivity analysis of the inverter system ~\eqref{crfinv2},~\eqref{der8},~\eqref{idc2}, ~\eqref{acv1} is performed \irr{for the dynamics relevant} at a fast timescale. Thus the secondary control parameter {$\chi$} is taken as constant since this is adjusted at  slower timescales.
Defining the state vector $x=[\delta^\top\, V_{dc}^\top\, I^\top_{DQ}\, V^\top_{oDQ}\, I_{oDQ}^{\top}\, $ $\beta^{\top}_{DQ}\, \xi^{\top}_{DQ}]^\top$ and the deviations $\tilde{x}=x-x^*$, 
{${\tilde{V}_{bDQ}=}{V}_{bDQ}-{V}^*_{bDQ}$, ${\tilde{I}_{oDQ}=}{I}_{oDQ}-{I}^*_{oDQ}$},
the linearization  of the inverter system ~\eqref{crfinv2},~\eqref{der8},~\eqref{idc2},~\eqref{acv1}   about  $(x^*, \omega_{0}, I^*_{dc}, m^*_{DQ}, {V}^*_{bDQ})$ {with $\tilde{u}=\tilde{V}_{bDQ}$, $\tilde{y}=\tilde{I}_{oDQ}$} is 
\begin{equation}
	\label{linmodell}
		\begin{split}
\dot{\tilde{x}}&=(A-C_{\delta}^{\top}k_p\underline{\mathbf{e}}^{\top}C-C_{\delta}^{\top}k_IC_{\delta}-B\hat K)\tilde{x}+B_u\tilde{u}, \hspace{4mm} \\
	\tilde{y}&=C\tilde{x}+D_u\tilde{u},	
		\end{split}
\end{equation}
{where $A, B, B_u,  C, C_\delta, D_u, \hat{K}$ are given in Appendix~\ref{Amat}.}

{We state Proposition~\ref{passproposition}, which follows directly from the KYP Lemma~\cite{khalil2014}. 
Proposition~\ref{passproposition} gives  a gain selection criterion that allows to  choose {appropriate}
$k_p, k_I, \underline{n}_q, c_{p}, c_{I},  \lambda_{P}, \lambda_{I}$
such that each inverter in~\eqref{linmodell} satisfies {the passivity property {in Assumption~\ref{asspas},}
} 
which is {a decentralized} condition for {stability.} 

\begin{prop}
\label{passproposition}
Consider the inverter system~\eqref{linmodell} with input $\tilde{u}=\tilde{V}_{bDQ}$ and output  $\tilde{y}=\tilde{I}_{oDQ}$.
The inverter system is 
strictly passive\footnote{\yrr{\icf{It should be noted that since Proposition~\ref{passproposition} is associated with the linearized system ~\eqref{linmodell}, 
the  corresponding stability result in Theorem \ref{thrmclp} would be local for the original nonlinear system.}}}
if there exists a positive definite matrix $P=P^{\top}$ {and some $\epsilon>0$} such that
\begin{equation}
	\begin{bmatrix}\label{lmii}
		\Sigma {\,+\,\epsilon P}    & PB_{u}- C^{\top}  \\
		 B^{\top}_{u}P-C    & -D^\top_u-D_u
	\end{bmatrix}\leq0
\end{equation}
where
{$\Sigma$ is  defined in Appendix \ref{Amat}.}
\end{prop}
{The proof follows directly from the KYP Lemma \cite{khalil2014}.}

\begin{rmk}\label{controllergainchoice}
A possible approach to tune the {controller} parameters
is to first choose  $k_p, \underline{n}_q, c_{p}, c_{I},  \lambda_{P}, \lambda_{I}$ 
and then adjust $k_{I}$ so that  the passivity condition \eqref{lmii} is satisfied (discussed in more detail in section \ref{resultspass}).
This approach was followed in various benchmark examples discussed in section  \ref{resultspass} where the  passivity property is satisfied.
\end{rmk}

\begin{rmk}\label{rmk:kyp}
	{{An alternative} way to verify the  passivity property in Assumption 3 is via the strict positive realness of the inverter system~\eqref{linmodell} transfer function $G(s)=C(s\mathbf{I}-A+C_{\delta}^{\top}k_p\underline{\mathbf{e}}^{\top}C+C_{\delta}^{\top}k_IC_{\delta}+B\hat K)^{-1}B_u+D_u $  \cite[Lemma 6.1]{khalil2014}. In particular,
 the eigenvalues of the Hermitian part of the transfer matrix at all frequencies must be {positive, 
i.e.  $\scriptsize{G(\mathbf{j}\omega)+G^{*}(\mathbf{j}\omega)>0}$}.
}
\end{rmk}

\section{Secondary Control Scheme}
		\label{powersharing}
		Here we discuss the active power sharing that~\eqref{der8} can  provide when
		 $\chi$ is updated via the distributed scheme described below, which can be seen as a secondary control policy occurring at slower timescales:
		\begin{equation}\label{sec1}
		\dot \chi = -\alpha\mathcal{L}\chi+\alpha\mathcal{L}k_I\delta.
		\end{equation}
		where  $\alpha>0$.
		\yo{In contrast to \ir{other} non-droop strategies
			e.g.~\cite{zhang2014online, zhang2016transient, sadabadi2016plug, tucci2020scalable, tucci2017voltage, riverso2014plug, tucci2016plug, jeremylestas2020, moradi2010robust},
incorporating the secondary control \eqref{sec1} \ir{within} our policy \eqref{der8} \ir{allows to achieve \ryo{active} power sharing without having to rely on setpoint updates via
	\ryo{optimal power flow solutions.} This allows \ryo{active} power sharing to be achieved in the presence of  unexpected load changes.

\ir{The \ryo{active} power sharing property \ryo{(Remark \ref{rmk-power})} achieved at equilibrium by our control policy \eqref{sec1}  follows from the \ryo{current sharing} relation stated below \ryo{in Proposition \ref{pshare}.}}}}
		\begin{prop}[\ryo{Current } sharing] \label{pshare}
			At equilibrium the dynamics given by~\eqref{net},~\eqref{crfinv2},~\eqref{der8},~\eqref{idc2},~\eqref{acv1},~\eqref{sec1} satisfy
			\begin{equation}\label{condp}
			\frac{i^*_{oD,j}}{i^*_{oD,k}}=\frac{k_{p,k}}{k_{p,j}}, \hspace{10mm} \forall j,k\in N.
			\end{equation}
			\end{prop}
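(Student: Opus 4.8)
The plan is to read off the claim purely from the equilibrium equations of the three relations in which $\chi$ and $\delta$ appear, namely the secondary control policy~\eqref{sec1}, the angle dynamics~\eqref{crfinv2a}, and the frequency policy~\eqref{der8}; no Lyapunov argument or information about the fast inverter/network states is needed, since~\eqref{condp} is a statement about the fixed point only. The strategy is to obtain two independent expressions for the vector $\chi^*-k_I\delta^*$ and equate them.

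First I would set $\dot\chi=0$ in~\eqref{sec1}, which gives $\alpha\mathcal{L}(\chi^*-k_I\delta^*)=0$, i.e.\ $\mathcal{L}(\chi^*-k_I\delta^*)=0$. Assuming the graph $(N,E)$ is connected, the Laplacian $\mathcal{L}=\mathcal{B}\mathcal{B}^\top$ has a one-dimensional kernel spanned by $\mathbf{1}_n$, so there is a scalar $c\in\mathbb{R}$ with $\chi^*-k_I\delta^*=c\,\mathbf{1}_n$. Next I would extract the corresponding steady-state relation from the frequency law: setting $\dot\delta=0$ in~\eqref{crfinv2a} forces $\omega^*=\omega_0\mathbf{1}_n$, and substituting this into~\eqref{der8} evaluated at equilibrium cancels the $\omega_0\mathbf{1}_n$ terms and leaves $k_p\,\underline{\mathbf{e}}^\top I^*_{oDQ}=\chi^*-k_I\delta^*$. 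Recognising that $\underline{\mathbf{e}}^\top I^*_{oDQ}=I^*_{oD}=\mathrm{col}(I^*_{oD,j})$ extracts the direct-coordinate currents, I would combine the two expressions to get $k_p\,I^*_{oD}=c\,\mathbf{1}_n$, i.e.\ $k_{p,j}I^*_{oD,j}=c$ for every $j\in N$.

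Since the right-hand side is independent of $j$, the products $k_{p,j}I^*_{oD,j}$ are all equal across buses; taking the ratio for any two buses $j,k$ and rearranging yields $\tfrac{I^*_{oD,j}}{I^*_{oD,k}}=\tfrac{k_{p,k}}{k_{p,j}}$, which is precisely~\eqref{condp}. The calculation is essentially linear algebra once the two equilibrium substitutions are made, so I do not expect a genuine obstacle; the only real ingredient is connectivity of the graph, so that $\mathcal{L}$ has kernel exactly $\mathrm{span}\{\mathbf{1}_n\}$. The point to be careful about is the justification of $\omega^*=\omega_0\mathbf{1}_n$, which the angle dynamics~\eqref{crfinv2a} supply automatically at any equilibrium, and the observation that the common value $c$ need not vanish — indeed a nonzero $c$ is exactly what makes the sharing nontrivial, with $c$ fixed implicitly by the remaining equilibrium conditions. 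If the graph were disconnected, the same argument would yield power sharing within each connected component, with a (possibly different) constant per component.
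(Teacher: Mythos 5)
Your proof is correct and follows essentially the same route as the paper: combine the equilibrium condition of~\eqref{sec1} with the steady-state relation $\chi^*-k_I\delta^*=k_p\underline{\mathbf{e}}^\top I^*_{oDQ}$ obtained from~\eqref{crfinv2a} and~\eqref{der8}, then use the kernel of the Laplacian to conclude $k_pI^*_{oD}$ is a multiple of $\mathbf{1}_n$. Your explicit remarks on graph connectivity and on the sign of the common constant are slightly more careful than the paper's one-line argument, but the substance is identical.
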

			The proof can be found  in Appendix~\ref{prf:pshare}.
			\begin{rmk}[\ryo{Power sharing}]\label{rmk-power}
				Note that~\eqref{condp} gives approximate power sharing at equilibrium. 					
				\ryo{\ic{In particular, under} the assumption that the quadrature-component \il{of the voltage $v^*_{oQ,j}$ is} significantly smaller compared to the \il{direct component $v^*_{oD,j}$} \il{(discussed in Remark \ref{rem:Papp})},   the active power relation can be \il{approximated as}
				$P^*_{o,j}:=v^*_{oD,j}i^*_{oD,j},\,\forall j\in N$. Using \il{also}
					\eqref{condp}, the power sharing ratio  between inverter $j,k\in N$ is \il{then} given by}	
								\begin{equation}	\label{condp1}
									\frac{P^*_{o,j}}{P^*_{o,k}}=\frac{v^*_{oD,j}}{v^*_{oD,k}} ~\frac{k_{p,k}}{k_{p,j}} , \hspace{3mm} \forall j,k\in N.
								\end{equation}				
					If $v^*_{oD,j}=v^*_{oD,k}$, which is a property that approximately holds since the voltage\footnote{\yrr{In our analysis we have taken the line resistances into account which would cause voltage drop. Nonetheless, the design and implementation of our voltage control mechanism in each inverter helps to keep the voltage deviations  generally small. We have demonstrated this in  Fig. \ref{figs}(e) where in the simulation  sizable line impedance parameters are used and our voltage control policy is implemented in each inverter.}}
					 does not vary much compared to its nominal value \cite{yemimo2020, pogaku2007},
					the active power is proportionally shared among the inverters according to the ratio ${k_{pk}}/{k_{pj}}$.
					The values of  $k_{p,j}$ are chosen inversely  {proportional}
					to the rating of the inverters, where those with high ratings take small values  and vice versa.

				\begin{rmk}\label{rem:Papp}
					\ryo{To explain the approximation used in \eqref{condp1},
				consider the active power relation $P^*_{o,j}:=v^*_{oD,j}i^*_{oD,j}+v^*_{oQ,j}i^*_{oQ,j},\,\forall j\in N$ \cite{pogaku2007}. The power sharing ratio between inverter $j,k\in N$ is given by
				\begin{equation}\label{po-1}
\frac{P^*_{o,j}}{P^*_{o,k}}=\frac{v^*_{oD,j}i^*_{oD,j}+v^*_{oQ,j}i^*_{oQ,j}}{v^*_{oD,k}i^*_{oD,k}+v^*_{oQ,k}i^*_{oQ,k}}
				\end{equation}
				\il{Similarly to} conventional implementations \cite{pogaku2007}, \il{in our voltage control policy in \eqref{acv3}  we choose the nominal voltage as $\mathbf{{e}}V_n$ in DQ coordinates, i.e. its  direct-component is equal to the nominal value $V_n$ while its quadrature-component is chosen to be zero.}
Hence  the voltage direct-components $v^*_{oD,j}$ are approximately $V_n$ while  the voltage quadrature-components $v^*_{oQ,j}$ take values close to zero \cite{yemiisgt2021, pogaku2007}. 
\il{\ic{Therefore,} we have $v^*_{oQ,j}i^*_{oQ,j} \il{\ll} v^*_{oD,j}i^*_{oD,j}$ $\forall j\in N$,
				\ic{thus} \eqref{po-1} can} be approximated as
				\begin{equation*}	
					\frac{P^*_{o,j}}{P^*_{o,k}}=\frac{v^*_{oD,j}}{v^*_{oD,k}} \frac{i^*_{oD,j}}{i^*_{oD,k}}=\frac{v^*_{oD,j}}{v^*_{oD,k}} ~\frac{k_{p,k}}{k_{p,j}} , \hspace{3mm} \forall j,k\in N
				\end{equation*}	
			where the latter equality follows by using \eqref{condp}, hence yielding the power sharing ratio \eqref{condp1}.}
%
		\end{rmk}

			\end{rmk}

\il{Proposition \ref{pshare}} is a statement about the equilibrium point. It can be shown that the equilibrium point is also locally asymptotically stable under \yo{the \irr{commonly used}} assumption of timescale separation between \irr{secondary} control and the inverter/line dynamics, \yo{where the former is much slower than the latter.}
For the analysis below we assume that $\chi$ is updated at a much slower timescale ($100\,\text{ms}$) than the inverter and line dynamics ($1\,\text{ms}$) such that  in this timescale
~\eqref{net},~\eqref{crfinv2},~\eqref{der8},~\eqref{idc2},  ~\eqref{acv1}  
is assumed to have reached \irr{equilibrium;} thus we obtain the linearized static model~\eqref{e13} (see Appendix~\ref{timesep} for its derivation).
\begin{equation} \label{e13}
	\tilde\delta= -(k_Ik_p^{-1}+ F(\delta^*)V_n)^{-1}k_p^{-1}\tilde\chi
\end{equation}
where
\begin{equation} \label{fdelta}
	F(\delta^*)=\underline{\mathbf{e}}^\top Y_2\mathbf{J}^{\top}\mathbf{T}(\delta^*)\mathbf{\underline{e}}
\end{equation}
$Y_2=((R_{c}-\omega_0 L_{c}\mathbf{J})+Y_1^{-1}-\underline{n}_q)^{-1}$,
$Y_1=(G_l-\omega_0C_l\mathbf{J})+(R_{\ell}-\omega_0L_{\ell}\mathbf{J})^{-1}+\mathbf{B}(R_l-\omega_0L_l\mathbf{J})^{-1}\mathbf{B}^{\top}$.
Linearizing~\eqref{sec1} around
$(\chi^*, \delta^*)$ gives
\begin{equation}\label{sec3}
	\dot {\tilde\chi} = -\alpha\mathcal{L}\tilde \chi+\alpha\mathcal{L}k_I\tilde\delta.
\end{equation}
Furthermore, we define the following quantity:
\begin{equation}\label{eq:M}
	\mathcal{M}(\mathbf{\delta^*})=\mathbf{I}_n+k_I(k_Ik_p^{-1} + F(\mathbf{\delta^*})V_n)^{-1}k_p^{-1}
\end{equation}
We now state the {following} stability result. The proof can be found  in Appendix~\ref{prf:stabilityps}.

\begin{thrm}\label{stabilityps} 
	Consider  system
	~\eqref{e13},~\eqref{sec3} and $\mathcal{M}(\mathbf{\delta^*})$ as in~\eqref{eq:M}.
	Suppose 
	$|\delta^*_j|<\pi/2,\,\forall j\in N$, and $k_{p,j},\,k_{I,j}, \forall j\in N$ are 	
	selected such that
	$\tau=k_{I,j}/k_{p,j}\,\forall\, j\in N,$ for some $\tau>0$.
	When $|\delta^*_j|$ are sufficiently small at an equilibrium point of the interconnected system, then all trajectories in~\eqref{e13},~\eqref{sec3} converge to an equilibrium point.
	More precisely,
	  {convergence}
is guaranteed  if
	\begin{equation} \label{matrixcond}
		\norm{\Delta}_2 < K^{-1}\lambda_{n-1}(\hat H)
	\end{equation}
	where
 $\Delta = \mathcal{L}(\mathcal{M}(\mathbf{\delta^*})-\mathcal{M}(\mathbf{0}))$,
 $\mathcal{M}(\mathbf{0})=\mathbf{I}_n+(\mathbf{I}_n + \frac{1}{\tau}F(\mathbf{0})V_n)^{-1}$,
  $\hat H = \mathcal{L}\mathcal{M}(\mathbf{0})$,
 $K = \norm{\Psi^{-1}}_2\norm{\Psi}_2$ is the condition number, where $\Psi$ is the diagonalizing eigenbasis {of $\hat H$}, and
	the eigenvalues of $\hat H$ in descending order {are} $\lambda_1(\hat H), ..., \lambda_{n-1}(\hat H), \lambda_n(\hat H)$ (all eigenvalues of {$\hat H$} are real).
\end{thrm}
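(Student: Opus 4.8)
The plan is to use the assumed timescale separation to collapse \eqref{e13},~\eqref{sec3} into a single reduced dynamics in $\tilde\chi$ and then study it as a perturbation, about $\mathbf{\delta^*}=\mathbf{0}$, of a nominal Laplacian consensus flow. First I would substitute the static map \eqref{e13} into \eqref{sec3}. The hypothesis $\tau=k_{I,j}/k_{p,j}$ for all $j$ gives $k_Ik_p^{-1}=\tau\mathbf{I}_n$, which is exactly what reduces the gain factor to $\mathcal{M}(\mathbf{\delta^*})$ in \eqref{eq:M} and specializes it to $\mathcal{M}(\mathbf{0})$. The substitution yields
\begin{equation*}
\dot{\tilde\chi}=-\alpha\mathcal{L}\mathcal{M}(\mathbf{\delta^*})\tilde\chi=-\alpha(\hat H+\Delta)\tilde\chi,
\end{equation*}
with $\hat H=\mathcal{L}\mathcal{M}(\mathbf{0})$ and $\Delta=\mathcal{L}(\mathcal{M}(\mathbf{\delta^*})-\mathcal{M}(\mathbf{0}))$ as in the statement, so that convergence is equivalent to a spectral statement about $\hat H+\Delta$.

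Second, I would pin down the spectrum of the nominal matrix $\hat H$. The key structural claim is that $\mathcal{M}(\mathbf{0})$ is symmetric positive definite; granting this, $\hat H=\mathcal{L}\mathcal{M}(\mathbf{0})$ is similar, via $\mathcal{M}(\mathbf{0})^{1/2}$, to the symmetric positive semidefinite matrix $\mathcal{M}(\mathbf{0})^{1/2}\mathcal{L}\mathcal{M}(\mathbf{0})^{1/2}$. Hence its eigenvalues are real and nonnegative and it is diagonalizable with eigenbasis $\Psi$, so that $K=\norm{\Psi^{-1}}_2\norm{\Psi}_2$ in \eqref{matrixcond} is well defined. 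Assuming the graph $(N,E)$ is connected so that $\mathcal{L}$ has a simple zero eigenvalue, and using that $\mathcal{M}(\mathbf{0})$ is invertible, $\hat H$ has a simple eigenvalue $\lambda_n(\hat H)=0$ with left eigenvector $\mathbf{1}_n$ and right eigenvector $\mathcal{M}(\mathbf{0})^{-1}\mathbf{1}_n$, while $\lambda_1(\hat H)\ge\cdots\ge\lambda_{n-1}(\hat H)>0$. I would also record the identities $\mathbf{1}_n^\top\hat H=\mathbf{1}_n^\top\mathcal{L}\mathcal{M}(\mathbf{0})=0$ and $\mathbf{1}_n^\top\Delta=0$, so $\mathbf{1}_n$ is a left null vector of $\hat H+\Delta$ and the range of $\Delta$ lies in $W:=\{w:\mathbf{1}_n^\top w=0\}$.

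Third, I would run the perturbation argument on the invariant subspace $W$. Because $\mathbf{1}_n^\top(\hat H+\Delta)=0$, the scalar $\mathbf{1}_n^\top\tilde\chi$ is conserved, $W$ is invariant under the flow, and the zero eigenvalue persists exactly and simply, contributing the one-dimensional equilibrium subspace. The right eigenvectors of $\hat H$ for the positive eigenvalues form a basis of $W$ (biorthogonality to the left null vector $\mathbf{1}_n$), and since $\Delta$ maps into $W$, the restriction $(\hat H+\Delta)|_W$ is, in that eigenbasis, the diagonal matrix $\mathrm{diag}(\lambda_i(\hat H))$ plus a perturbation whose norm is bounded by $K\norm{\Delta}_2$. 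A Bauer--Fike estimate then places every eigenvalue of $(\hat H+\Delta)|_W$ within $K\norm{\Delta}_2$ of some $\lambda_i(\hat H)\ge\lambda_{n-1}(\hat H)$; under \eqref{matrixcond} this forces all of them to have strictly positive real part, so $-\alpha(\hat H+\Delta)$ is Hurwitz on $W$, while the complementary one-dimensional eigenspace carries the zero eigenvalue. Semistability of $\dot{\tilde\chi}=-\alpha(\hat H+\Delta)\tilde\chi$ then gives convergence of every trajectory to an equilibrium. Continuity of $\mathbf{T}(\cdot)$, hence of $F$ and of $\mathcal{M}$, gives $\Delta\to\mathbf{0}$ as $\mathbf{\delta^*}\to\mathbf{0}$, so \eqref{matrixcond} indeed holds once the $|\delta^*_j|$ are sufficiently small, linking the two halves of the statement.

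The hard part will be the second step: rigorously proving that $\mathcal{M}(\mathbf{0})$ is symmetric positive definite, equivalently that $F(\mathbf{0})=\underline{\mathbf{e}}^\top Y_2\mathbf{J}^\top\underline{\mathbf{e}}$ has the required symmetry and sign when combined with $V_n>0$ and $\tau>0$. This is what makes the spectrum of $\hat H$ real and anchors the whole perturbation scheme, and it must be extracted from the network and line admittance structure defining $Y_2$ and $Y_1$. A secondary subtlety, already handled above by restricting to $W$ rather than working with the full $n\times n$ matrix, is to separate the preserved consensus mode from the $n-1$ damped modes cleanly enough that the single threshold $K^{-1}\lambda_{n-1}(\hat H)$ in \eqref{matrixcond}, rather than a more conservative one, suffices.
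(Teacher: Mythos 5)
Your overall route is the same as the paper's: substitute \eqref{e13} into \eqref{sec3} to get $\dot{\tilde\chi}=-\alpha\mathcal{L}\mathcal{M}(\mathbf{\delta^*})\tilde\chi$, establish that $\hat H=\mathcal{L}\mathcal{M}(\mathbf{0})$ is similar to $\mathcal{M}(\mathbf{0})^{1/2}\mathcal{L}\mathcal{M}(\mathbf{0})^{1/2}$ and hence has real nonnegative spectrum with a simple zero, and then apply Bauer--Fike to $\hat H+\Delta$. Your treatment of the consensus mode (the conserved quantity $\mathbf{1}_n^\top\tilde\chi$ and the invariant subspace $W$) is in fact a cleaner way of separating the exactly preserved zero eigenvalue from the $n-1$ perturbed ones than the paper's phrasing. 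However, there is a genuine gap: everything in your argument hinges on the structural claim that $\mathcal{M}(\mathbf{0})$ is symmetric positive definite and that $\mathcal{M}(\mathbf{\delta^*})$ is nonsingular for $|\delta^*_j|<\pi/2$, and you explicitly defer this as ``the hard part'' without proving it. This is not a minor detail --- it is the content of the paper's Lemma~1, whose proof occupies most of the appendix argument: one must show that $Y_2$ inherits an admittance structure with $2\times2$ blocks of the form $\pm(a_{ij}\mathbf{I}_2+b_{ij}J)$ satisfying $a_{ii}>\sum_{j\neq i}|a_{ij}|$ and $b_{ii}>\sum_{j\neq i}|b_{ij}|$ (using the block-diagonal shunt/load terms, the weighted-Laplacian form of $\mathbf{B}(R_l-\omega_0L_l\mathbf{J})^{-1}\mathbf{B}^{\top}$, and smallness of $\underline{n}_q$); then that $F(\mathbf{\delta^*})=\underline{\mathbf{e}}^\top Y_2\mathbf{J}^{\top}\mathbf{T}(\mathbf{\delta^*})\underline{\mathbf{e}}$ is strictly column diagonally dominant with positive diagonal for $|\delta^*_j|<\pi/2$; and finally that this dominance passes to $(\mathbf{I}_n+\tfrac{1}{\tau}F(\mathbf{\delta^*})V_n)^{-1}$ via the inverse property of diagonally dominant matrices, while $F(\mathbf{0})$ is symmetric (its entries reduce to $\pm b_{ij}$) and positive definite.

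Without that lemma, three steps of your proof are unsupported: the realness of the spectrum of $\hat H$ and the existence of the diagonalizing basis $\Psi$ (so that $K$ is even well defined); the strict positivity of $\lambda_{n-1}(\hat H)$, which is the right-hand side of \eqref{matrixcond}; and the simplicity of the zero eigenvalue of $\mathcal{L}\mathcal{M}(\mathbf{\delta^*})$, which requires invertibility of $\mathcal{M}(\mathbf{\delta^*})$ (the left null vector $\mathbf{1}_n^\top$ alone gives existence of a zero eigenvalue, not its simplicity). You correctly identified where the difficulty lies, but the proof as proposed is incomplete until this structural result on $F$ and $\mathcal{M}$ is actually established.
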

\begin{rmk}
	The upper bound in~\eqref{matrixcond} 
		can easily be computed
	as $\mathcal{L}$ and $\mathcal{M}(\mathbf{0})$ are known matrices.
It should also be noted that in the example given in section~\ref{results},
	this	condition is only slightly conservative and is easily satisfied
	(with considerable margin) for all realistic values of~$\mathbf{\delta^*}$.	
\end{rmk}

\ryo{\begin{rmk}
		The requirement on the  angles  $|\delta^*_j|<\pi/2$ in Theorem \eqref{stabilityps}  ensures that the system security constraint is not violated, which is normally satisfied for typical operating points.
\end{rmk}}

	\section{Simulation Results}
\label{results}
In this section, {we illustrate the control policy implementation}, assess the passivity of the inverters
and show via simulations the  performance of the proposed controllers.

\subsection{Implementation of control policy}\label{sec:impl}
We illustrate the implementation of the  control  schemes~\eqref{der8},~\eqref{idc2} and~\eqref{acv1}
in Fig.~\ref{blockfig}. The double loop architecture is shown in~Fig. \ref{twoloops} and the DC voltage system in Fig. \ref{dcloop}. The physical measurements required to implement our controllers are  the filter three-phase  ($abc$) voltage and currents (i.e. $v_o, i, i_o$).
{The  $DQ$ signals used by our controllers are obtained from the $DQ$-transformation of the physical three-phase ($abc$) symmetrical signals.}
The angle droop block uses $\chi$, $I_{oDQ}$ together with~\eqref{crfinv2a} and~\eqref{der8} to compute~$\delta$. The secondary control is in a feedback configuration with the angle droop block and it uses $\delta$ and~\eqref{sec1} to compute $\chi$, which is fed back into the angle droop block. Then, the signals $I_{DQ}$, $V_{oDQ}$, $I_{oDQ}$ and $\delta$ are fed into the double loop voltage control and DC voltage system to compute $m_{DQ}$ using~\eqref{crfinv2b}, \eqref{idc2} and~\eqref{acv1}.
	 Thereafter,  the $abc$ form of $m_{DQ}$  is used in the PWM switching to actuate the inverter electronic switches.

\begin{figure}[ht]
	\centering
	\includegraphics[width=1.\linewidth]{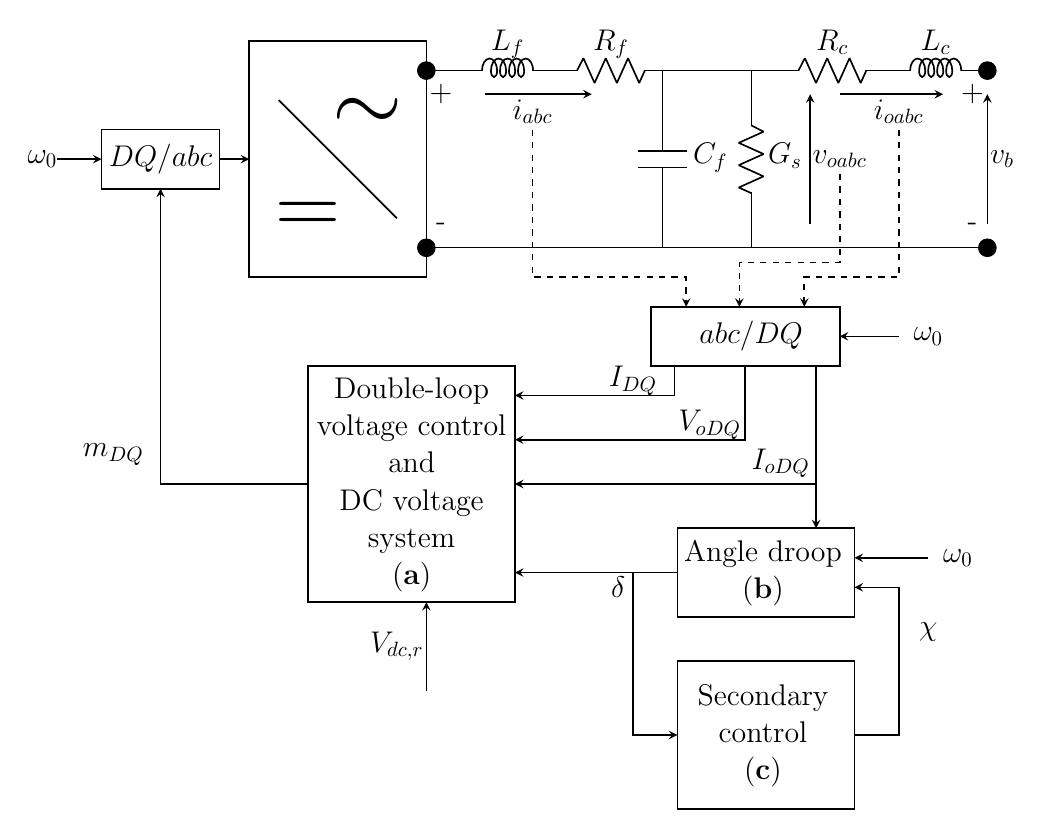}
	\vspace{-8.mm}
	\caption[Block diagram of the proposed control scheme implementation]{Block diagram of the proposed control scheme implementation. \newline
		\textbf{(a)} is \eqref{crfinv2b}, \eqref {idc2}, \eqref{acv1};
		\textbf{(b)} is \eqref{crfinv2b}, \eqref{der8}; and
		\textbf{(c)} is \eqref{sec1}.
	}
\vspace{-3.5mm}
	\label{blockfig}
\end{figure}


\begin{figure}[h!]
	\centering
	\includegraphics[width=1\linewidth]{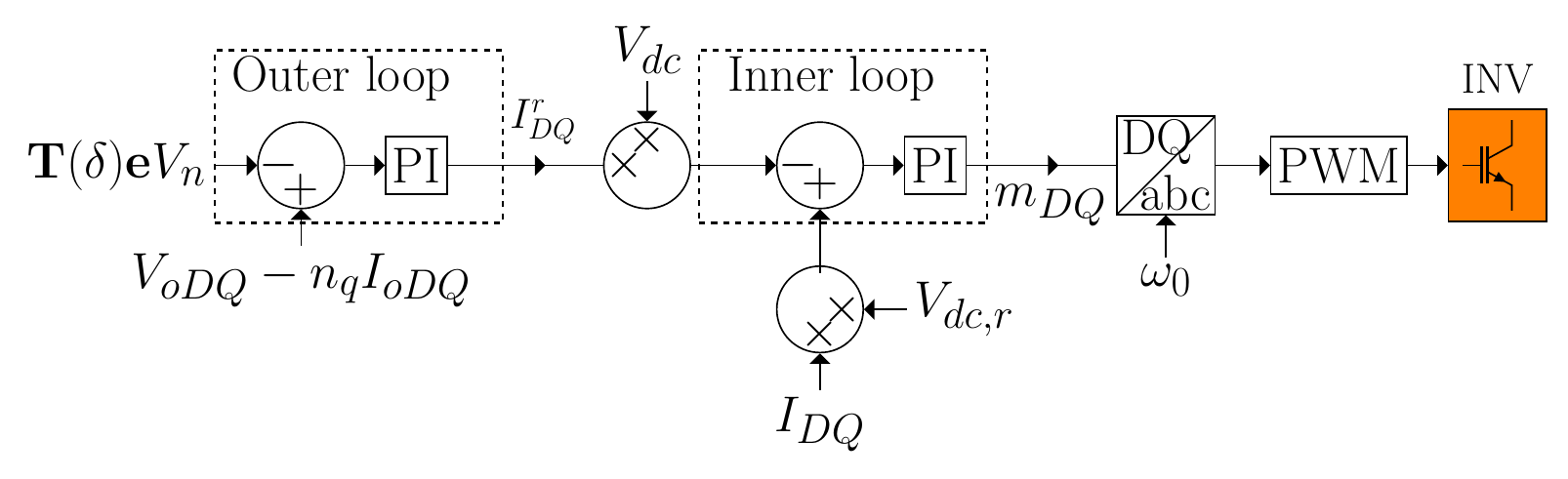}
	\vspace{-8.mm}
	\caption{Block diagram of the proposed double loop voltage control \eqref{acv1}. {{The blocks} denoted by PI represent proportional-integral controllers.}
	}
	\vspace{-3.5mm}
	\label{twoloops}
\end{figure}

\begin{figure}[h!]
	\centering
	\includegraphics[width=1\linewidth]{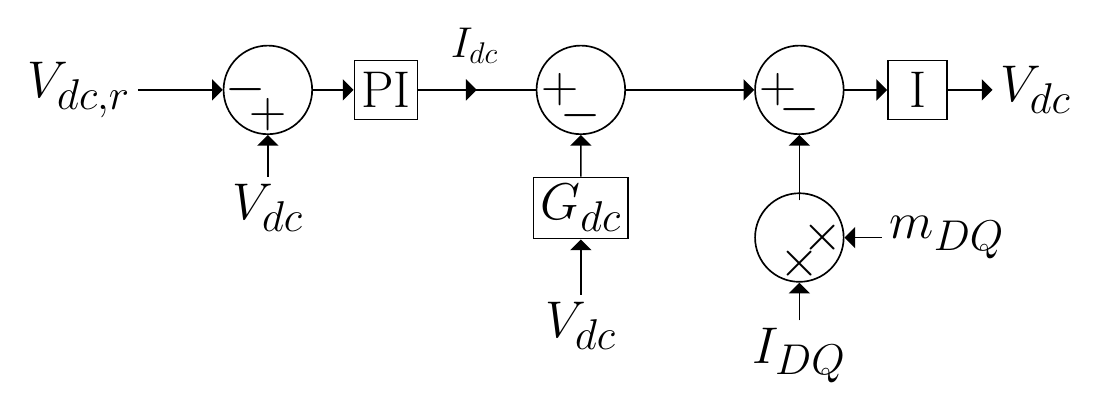}
	\vspace{-4.mm}
	\caption{Block diagram of the DC voltage system~\eqref{crfinv2b},~\eqref{idc2}. {{The blocks} denoted by PI, I respectively represent proportional-integral and integral controllers, respectively}.
	}
	\vspace{-1.5mm}
	\label{dcloop}
\end{figure}

\subsection{Passivity assessment of inverters}
\label{resultspass}
Following our analysis in section~\ref{main}, here we check that appropriate control parameters
associated with the proposed control schemes~\eqref{der8},~\eqref{idc2},~\eqref{acv1} are used to allow the inverters to satisfy the passivity property required  in Assumption~\ref{asspas}.
\begin{figure}[h]
	\centering
	\includegraphics[width=1\linewidth]{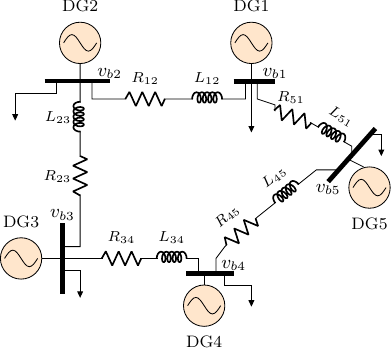}
	\vspace{-3.mm}
	\caption{An autonomous inverter-based microgrid consisting of five grid-forming inverters. The sign $\downarrow$ denotes loads. }
	\vspace{-1.5mm}
	\label{inverterTest}
\end{figure}
We begin by choosing $k_p, \underline{n}_q, c_{p}, c_{I},  \lambda_{P}, \lambda_{I}$.
	Evaluating~\eqref{iner1} at equilibrium shows that small values of $k_{p,j}/k_{I,j}$ allow to have sufficiently small equilibrium angles. Thus we choose small values of $k_{p,j}$,  starting with $k_{p,j}=10^{-3},\,\forall j\in N$.
	To keep  the  steady-state voltage deviation sufficiently small, 	we start with  ${n}_{q,j}=10^{-3},\,\forall j\in N$.
	Following  the standard double  loop design, we choose $c_{p}, c_{I},  \lambda_{P}, \lambda_{I}$  such that  the inner loop is faster than the outer (voltage) loop. In particular, we set the    integral time constant of the inner loop (i.e. $\lambda_{P,j}/\lambda_{I,j}$) such that it is less than  that of the outer loop  (i.e. $c_{p,j}/c_{I,j}$), for all $j\in N$.
	Thus for a start we choose the ratio  $c_{p_j}/c_{I_j}=1/10$,
	$\lambda_{P_j}/\lambda_{I_j}=1/20,\,\forall j \in N$.
	\yo{We note that the only \ir{equilibrium values} 
required  in \eqref{lmii}
	are\footnote{These \irr{appear} 
in $\mathbf{V}^*_{dc}, \mathbf{I}^*_{DQ}, \delta^*, m^*_{DQ}$ 
\irr{in} matrices $\hat A, \hat B$ in Appendix \ref{Amat}.}
	 ${v}^*_{dc}, {i}^*_{DQ,j}, \delta^*_j, m^*_{DQ,j}$.
A good choice is to use a user defined  operating point that
 corresponds to \ir{rated values}  as follows:
  ${v}^*_{dc,j}=V_{dc,r}, {I}^*_{DQ,j}={I}^{d}_{DQ,j}, \delta^*_j=0~\text{rad}, m^*_{DQ,j}=\yl{[0.87~-0.5]^{\top}}$.
This selection is  based on the fact that  ${v}_{dc,j}$   tracks $V_{dc,r}$; the current 
is expected not to exceed its rated value ${I}^{d}_{DQ,j}$;
 the angles should be close to $0~\text{rad}$ for system security;
and the modulating index
specified is the worst case such that the typical requirement $\|{m}^*_{DQ,j}\|_2\leq1$ for linear modulation   is not violated.}
\begin{table}[h]
	\centering
	\caption{Microgrid parameters}\label{tb1}
	\scriptsize
	\begin{tabular}{l|l}
		Description & Value \\
		\hline
		Inverter parameters & $R_{f_j}$=0.1 $\Omega$, $L_{f_j}$=5 $\text{mH}$, $C_{f_j}$=50 $\mu$F, $C_{dc,j}$=10 mS,\\& $R_{c_j}$=0.2 $\Omega$, $L_{c_j}$=2 mH,
		$G_{dc,j}$=10 mS, $G_{s_j}$=3 mS\\
		\hline
		Controller parameters & $\omega_0$ = 2$\pi$(50) rad/s, 
		$v_{dc_{r}}$=10$^{3}$ V,  $V_{n}$=311 V, $\alpha$ = 667,\\& $k_{p,j}$=0.06, $n_{q,j}$=0.078,  $k_{I,j}$=40, $c_{p,j},\Lambda_{P,j}$=1,  \\&$c_{I,j},\Lambda_{I,j}$=10,
		$\lambda_{P,j}$ = $1/v_{dc,r}$,  $\lambda_{I,j}$ = 25$/v_{dc,r}$\\
		\hline
		Loads parameters & $R_{\ell,1}$, $R_{\ell,2}$, $R_{\ell,3}$=20 $\Omega$,
		$R_{\ell,4}$, $R_{\ell,5}$=25 $\Omega$, \\&  $L_{\ell,1}$, $L_{\ell,3}$=30 mH, $L_{\ell,2}$, $L_{\ell,4}$=40 mH, \\& $L_{\ell,5}$=20 mH,  3.0 kW/0.5 kVar at bus 1\\
		\hline
		Switched loads &   2.5 kW at buses 1, 2, 3 \& 4\\
		\hline
		Line parameters & $R_{12}$=0.2 $\Omega$, $R_{45}$ = 0.15 $\Omega$, $R_{23},R_{34},R_{51}$=0.1 $\Omega$, \\ & $L_{12},L_{34}$ = 4 mH,  $L_{23}$=2.8 mH, $L_{45}$=3.5 mH, \\ & $L_{51}$=3 mH, $C_{j}$=0.1 $\mu$H, $G_j$=1 mS\\
		\hline
		Conventional scheme & $k_{p,j}$=0.06$/311$, $n_{q,j}$=0.078$/311$,\\&  $K_{pv}$=5, $K_{iv}$=10, $K_{pi}$=2, $K_{ii}$=15
	\end{tabular}
	\vspace{-3mm}
	\label{tab:params}
\end{table}
\begin{figure}[h]
	\includegraphics[width=1\linewidth]{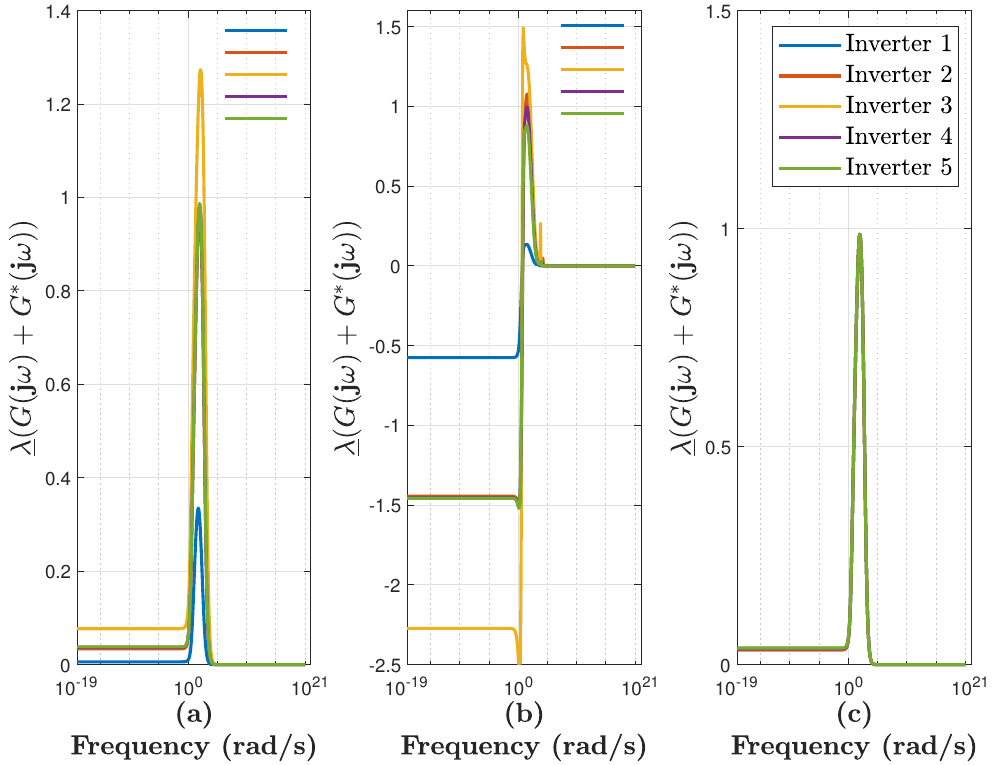}   
	\vspace{-5.mm}
	\caption{ Passivity of grid-forming  inverters showing the minimum eigenvalue {of
			$G(\mathbf{j} \omega)+G^*(\mathbf{j} \omega)$}:
		(a) with {the} proposed control scheme;
		(b) with conventional frequency droop and voltage  scheme~\cite{pogaku2007};
		(c) with {the} proposed control scheme for the example presented in section~\ref{resultssimu}
	}
	\vspace{-3.mm}
	\label{inverterpass}
\end{figure}

We \yo{then}   verify the  passivity property  by searching for the values of  $k_{I,j}$ (typically
within  $0< k_{I,j}\leq100$) that satisfy~\eqref{lmii}.
Further  verification of the passivity property can be performed by using the equilibrium point  obtained from the simulations and minor adjustments can be made to the control parameters to improve performance.
The control gains used for the five-inverter test system (Fig.~\ref{inverterTest}) are given in Table~\ref{tb1}.
 Thus, each inverter satisfies the passivity condition~\eqref{lmii} expressed in the frequency domain (see Remark~\ref{rmk:kyp}). This is  shown in Fig.~\ref{inverterpass}(c) where  the smallest eigenvalue  is {positive} over all frequencies, thus validating that all the inverters in the example are {strictly} passive.

Furthermore, we investigate whether the passivity property  is satisfied with the proposed control scheme~\eqref{der8},~\eqref{idc2},~\eqref{acv1}
on various benchmark examples in~\cite{chandorkar1993, kolluri2017power,  ritwikMajumder2010, ysunGuerrero2017, guerreroMatas2004}  commonly used in the literature to validate control policies for grid forming inverters.
Their inverter parameters are in the range $0.05\,\Omega\leq R_{f_j}\leq 1.5\,\Omega$,
$0.08\,\text{mH}\leq L_{f_j}\leq 8\,\text{mH}$,
$20\,\mu\text{F}\leq C_{f_j}\leq 150\,\mu\text{F}$,
$0.1\,\text{mH}\leq L_{c_j}\leq 30\,\text{mH}$,
$0.03\,\Omega\leq R_{c_j}\leq 2\,\Omega$.
As described above, a user defined operating point corresponding to inverters with rating  $10$-$15$ kVA is used.
We  select suitable control parameters
$0.006\leq k_{p,j}\leq0.06$, $0\leq n_{q,j}\leq0.078$, $1\leq c_{p,j}\leq5$, $10\leq c_{I,j}\leq50$, $10^{-3}\leq \lambda_{P,j}\leq0.1$,  $2.5$$\cdot$$10^{-3}\leq \lambda_{I,j}\leq2.5$.
Then the passivity property is verified by modifying $k_{I}$ such that condition~\eqref{lmii} is satisfied.
Fig.~\ref{inverterpass}(a) shows the passivity result with the proposed scheme, where each plot corresponds to the
benchmark examples in~\cite{chandorkar1993, kolluri2017power,  ritwikMajumder2010, ysunGuerrero2017, guerreroMatas2004}, and this is compared to that with the conventional frequency and voltage control scheme~\cite{pogaku2007} shown in Fig.~\ref{inverterpass}(b).
Fig.~\ref{inverterpass}(a) shows  that the smallest eigenvalue  is positive over all frequencies, thus validating that the inverters in these examples satisfy the passivity property for appropriate values of $k_{I}$, in contrast to those with conventional schemes shown in Fig.~\ref{inverterpass}(b).
We tested the proposed control scheme for other scenarios with realistic operating points for which  $|\delta_j^*|<\pi/2$ rad $\forall j\in N$,
and found the passivity property  to be satisfied for appropriate values of the control gains.

\begin{figure*}[h]
	\centering
	\includegraphics[width=1.\linewidth]{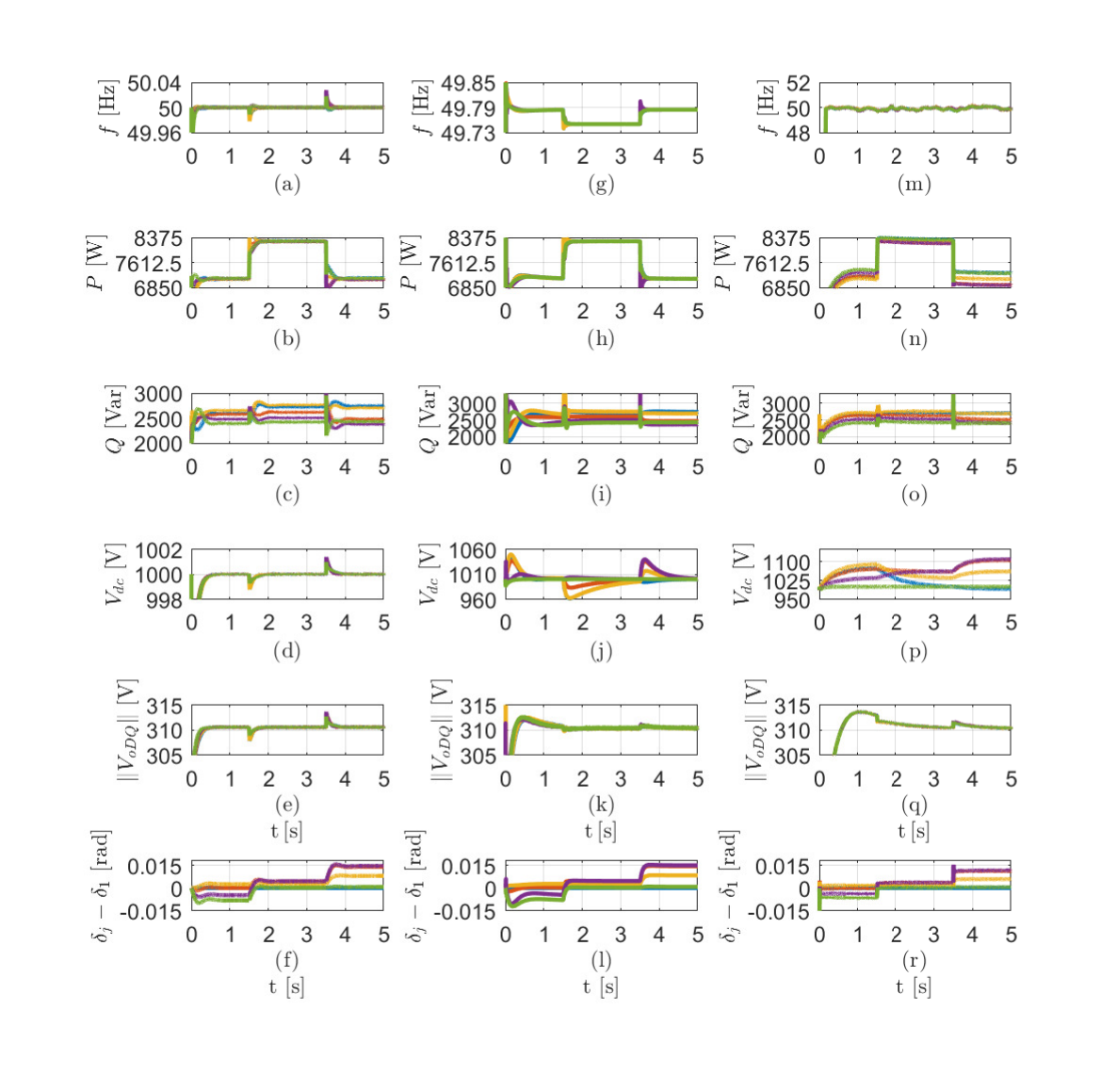}
	\vspace{-18mm}
	\caption{System response with {the} proposed {controller} in (a)--(e) (1st column);
		System response with the conventional frequency droop and voltage  scheme~\cite{pogaku2007} in (g)--(k) (2nd column);
		\yo{System response with the conventional angle droop and voltage  scheme~\cite{majumder2009angle,ysunGuerrero2017} in (m)--(r) (3rd column).}
		\ryo{Inverter 1 'blue', Inverter 2 'red', Inverter 3 'yellow', Inverter 4 'purple', Inverter 5 'green'.}}
	\vspace{-4mm}
	\label{figs}
\end{figure*}


\begin{figure}[h]
	\centering
	\includegraphics[width=1\linewidth]{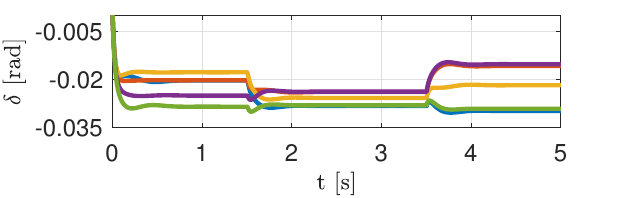}
	\vspace{-6mm}
	\caption{Trajectories of angles  with the proposed controller.		
		\ryo{$\delta_1$ 'blue', $\delta_2$ 'red', $\delta_3$ 'yellow', $\delta_4$ 'purple', $\delta_5$ 'green'.}}
	\vspace{-5mm}
	\label{figsang}
\end{figure} 

\begin{figure*}[h]
	\centering
\includegraphics[width=.95\linewidth]{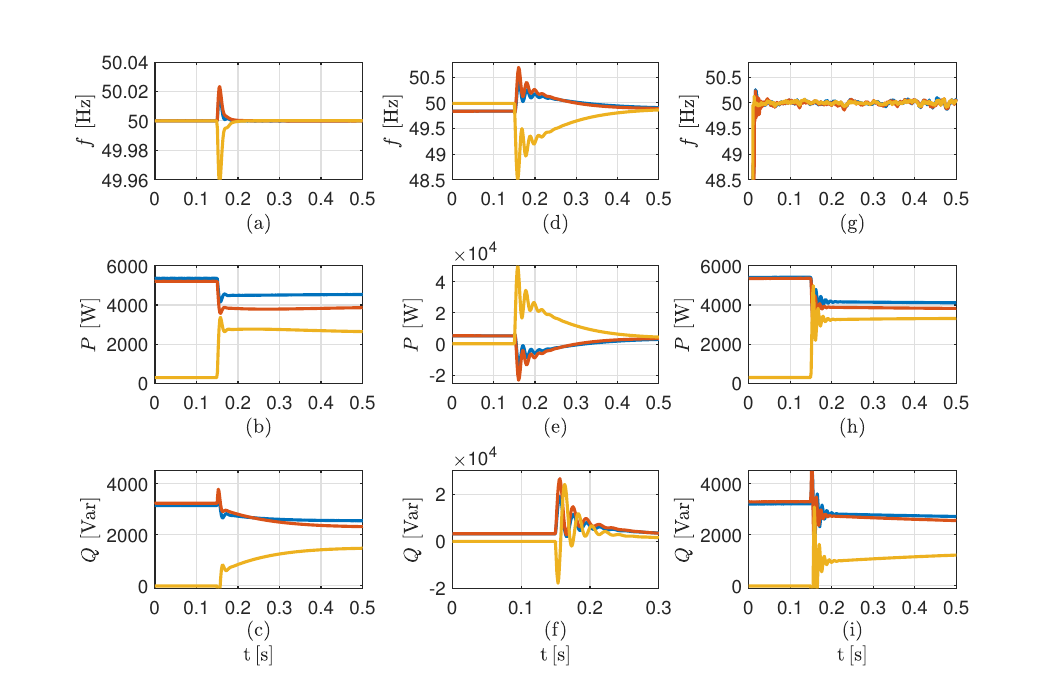}
	\vspace{-5mm}
	\caption{Plug-and-play response with {the proposed controller} in (a)--(c) (1st column);
		Plug-and-play response with conventional frequency droop and voltage  scheme in~\cite{pogaku2007} in (d)--(f) (2nd column);
		\yo{Plug-and-play response with conventional angle droop and voltage  scheme in~\cite{ysunGuerrero2017,majumder2009angle} in (g)--(i) (3rd column);}
		\ryo{Inverter 1 'blue', Inverter 2 'red', Inverter 3 'yellow'.}}
	\vspace{-4mm}
	\label{figss}
\end{figure*} 

\subsection{Numerical simulation}\label{resultssimu}
We show via simulations in MATLAB/Simscape Electrical
the performance of the proposed control policy~\eqref{der8},~\eqref{idc2},~\eqref{acv1},~\eqref{sec1}.
Fig.~\ref{inverterTest} shows the test system of five grid-forming inverters,
and  Table~\ref{tb1} presents the system parameters where the subscript $j=1,\ldots, 5$.
The simulation model is detailed 
and includes the PWM switching of the inverters.
The values of  $k_{I,j}, k_{p,j}$ satisfy the  selections in  Theorem~\ref{stabilityps}.
We compare the performance of the proposed scheme   to that with conventional frequency droop and voltage control~\cite{pogaku2007} \yo{and that with traditional angle droop and voltage control~\cite{majumder2009angle,ysunGuerrero2017}} in the presence of load step changes: a 2.5 kW load is switched on at buses 1 and 3 at $t=1.5\,\text{s}$, and an equivalent load is switching off at buses 2 and 4 at $t=3.5\,\text{s}$.
\yo{To test the robustness of our proposed scheme,} all the switched loads   and that connected throughout the simulation at bus $1$ are nonlinear  constant power loads and these are nominally rated at the reference voltage provided to the inverter.
The resistive-inductive loads  are connected to the corresponding buses throughout the simulation.
The response with the proposed control scheme is shown in
Fig.~\ref{figs}(a)--(f) (1st column), and this is compared to the equivalent response with the conventional frequency droop and voltage control~\cite{pogaku2007} shown in Fig.~\ref{figs}(g)--(l) (2nd column), \yo{and those with the traditional angle droop and voltage control~\cite{majumder2009angle, ysunGuerrero2017} shown in Fig.~\ref{figs}(m)--(r) (3rd column).}
The frequencies synchronize to  $\omega_{0}/2\pi$ Hz, in contrast to the \yo{steady-state frequency deviation} observed with the conventional frequency droop.
The active power sharing \yo{of our control scheme} with communication agrees with Proposition~\ref{pshare}, \yo{compared to some steady state error oberved with the traditional angle droop.}
The proposed control scheme also distributes the reactive power and improves the transient response.
\yo{There is significant improvement in the DC voltage regulation achieved by the proposed control scheme compared to those with the conventional schemes.
Our control policy shows better  transients in the output voltages which are well regulated within the typical requirement $\scriptsize{0.9V_n<\|V_{oDQ,j}\|<1.1V_n}$  contrary to those with the conventional schemes. Fig. \ref{figsang} shows that the angles $\delta$ associated with our control policy are small and are well within $|\delta_j^*|<\pi/2$ rad $\forall j\in N$. This is similarly observed with the angle differences in Fig.~\ref{figs}(f) which  compare to those  with conventional schemes in Fig.~\ref{figs}(l)~\&~\ref{figs}(r).}
The system remains stable  at each operating point with good transient performance  in the presence of  the constant power load disturbances, and hence demonstrates the effectiveness of the proposed control policy.
\yo{We note that  even though matching control (DC voltage based frequency control) \cite{arghirTaouba2018} with the conventional voltage scheme in \cite{pogaku2007} works for a two-inverter system, it could not stabilize the network configuration simulated  in Fig. \ref{inverterTest}. This is caused by the poor regulation of the DC voltage as \irr{also} observed with  the conventional schemes (Fig.~\ref{figs}(j)~\&~\ref{figs}(p)).}

\ryo{Furthermore, we check that Theorem~\ref{stabilityps} is satisfied.
From our simulation we compute the following parameters:
$\|\Delta\|_2=0.0178$,  $\lambda_{n-1}=2.4195$;  $K=1.0057$, and the upper bound as $K^{-1}\lambda_{n-1}=2.4057$. Hence we have $\|\Delta\|_2 < \frac{\lambda_{n-1}}{K}$, which shows that condition~\eqref{matrixcond} is satisfied.
Moreover, the fact that $|\delta^*_j|<\pi/2,\,\forall j\in N$ (Fig. \ref{figsang}),  $\tau=k_{I,j}/k_{p,j}=\frac{40}{0.06}=666.67>0\,\forall\, j\in N$,
and condition~\eqref{matrixcond}  \icl{is}
satisfied  show that   Theorem~\ref{stabilityps} holds for our simulation.}

\subsection{\ir{Plug-and-play} operation}\label{scalability}
We demonstrate the scalability of the proposed scheme as follows. We first simulate the connection of inverter~$1$ and~$2$  in Fig.~\ref{inverterTest}, then inverter~$3$ is {connected}  to bus~$2$.
The response with the proposed control scheme is shown in Fig.~\ref{figss}(a)--(c) (1st column), and this is compared to the equivalent response with the conventional frequency droop and voltage scheme~\cite{pogaku2007} shown in Fig.~\ref{figss}(d)--(f) (2nd column), \yo{and those with the conventional angle droop and voltage scheme~\cite{majumder2009angle, ysunGuerrero2017} shown in Fig.~\ref{figss}(g)--(i) (3rd column).}
In \yo{the three}
cases inverter~$1$ and~$2$ are synchronized before connecting inverter $3$ at $t=0.15\,\text{s}$.
{Note that the secondary controller is not used in the simulation in Fig.~\ref{figss}a--c. As it {involves} 
information from every inverter, it can be activated shortly after connecting the third inverter to achieve active power sharing.}
Fig.~\ref{figss}(a)--(c) shows  that the response of the proposed scheme has much fewer  oscillations and this is without  retuning controller parameters, which demonstrates a plug-and-play capability, \yrr{i.e. new inverters that implement our control policy can be introduced into the network while maintaining its stability, thus allowing to extend to larger networks.}
This is in contrast to \yo{Fig.~\ref{figss}(d)--(f) \& \ref{figss}(g)--(i) where the
conventional control policies
show}  severe oscillations {with} large \irr{overshoots.} 

~\\	
		\section{Conclusion}
		\label{concl}
\yo{We have  proposed  a control architecture for frequency and voltage control which is scalable via a passivity property, allows current limitation via an inner loop, and leads naturally to a distributed secondary controller which achieves \ryo{active} power sharing.
	The frequency controller employs the inverter output current and angle to provide an angle droop-like policy which improves its stability properties and incorporates \ir{a secondary} control policy for which we provide an analytical stability result which takes line conductances into account. The distinctive feature of the voltage control scheme is that it has a double loop structure that uses the DC voltage in the feedback control policy to implement a power-balancing strategy to improve performance. Using passivity analysis,  the stability of the frequency and voltage control was guaranteed  at faster timescales. Simulations	with  detailed  inverter models showed that the control scheme proposed offers good transient performance and scalability.}

\appendices


\section{Proof of Proposition~\ref{pshare}}
\label{prf:pshare}
	At equilibrium the $\dot \chi$ and $\dot \delta$ dynamics  simplify to
$\mathbf{0}_n=  \mathcal{L}k_p \underline{\mathbf{e}}^\top I_{oDQ}^*$, which holds if and only if $k_p \underline{\mathbf{e}}^\top I_{oDQ}^*=\bar\kappa\mathds{1}_n$, $\bar\kappa>0$.
	Thus
	$k_p \underline{\mathbf{e}}^\top I_{oDQ}^*=k_p I_{oD}^*=k_p\,\text{col}(i_{oD,j}^*)=\bar\kappa\mathds{1}_n\Leftrightarrow k_{pj}i_{oD,j}^*=k_{pk}i_{oD,k}^*=\bar\kappa, \forall j,k\in N$ which implies~\eqref{condp}.		
	$\hfill\blacksquare$

\section{Proof of Theorem~\ref{stabilityps}}\label{prf:stabilityps}
Updating~\eqref{sec3} with~\eqref{e13} gives~\eqref{e14}
where $\mathcal{M}(\delta^*)$ is as in~\eqref{eq:M}.
\begin{equation}\label{e14}
	\dot {\tilde \chi} =-\alpha\mathcal{L}\mathcal{M}(\delta^*)\tilde \chi
\end{equation}
{Therefore, the proof of Theorem~\ref{stabilityps}  reduces to 
proving the stability of~\eqref{e14}.
The following lemma is used in the proof of Theorem~\ref{stabilityps}.}

\begin{lem} \label{ml}
	Consider $\mathcal{M}(\delta^*)$ in~\eqref{eq:M} and~\eqref{fdelta}.
	Suppose  $k_{p,j},\,k_{I,j}, \forall j\in N$ are
	selected such that    $\tau=k_{I,j}/k_{p,j}, \forall j\in N$, for some $\tau>0$, and $|\delta^*_j|<\pi/2,\,\forall\,j\in N$.
	Then
	$\mathcal{M}(\delta^*)$
	is strictly diagonally dominant with positive diagonal entries.
	Moreover,
	$\mathcal{M}(\mathbf{0})$
	is symmetric and positive definite.
\end{lem}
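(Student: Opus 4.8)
The plan is to reduce the two claims to structural properties of the reduced-network matrix $F(\delta^*)$ in~\eqref{fdelta}, exploit the complex (impedance) structure underlying the $dq$ models, and close with an M-matrix/Gershgorin argument. First I would use the hypothesis $k_{I,j}/k_{p,j}=\tau$, so that $k_Ik_p^{-1}=\tau\mathbf{I}_n$ and
\[
\mathcal{M}(\delta^*)=\mathbf{I}_n+k_I\big(\tau\mathbf{I}_n+F(\delta^*)V_n\big)^{-1}k_p^{-1}=\mathbf{I}_n+k_p\Big(\mathbf{I}_n+\tfrac{1}{\tau}F(\delta^*)V_n\Big)^{-1}k_p^{-1}.
\]
Since conjugation by the positive diagonal $k_p$ is a similarity that preserves diagonal entries and the spectrum and only rescales off-diagonals, the positivity of the diagonal, the strict diagonal dominance, and (at $\delta^*=\mathbf{0}$) the symmetry and eigenvalues are governed by $N(\delta^*):=\mathbf{I}_n+(\mathbf{I}_n+\tfrac1\tau F(\delta^*)V_n)^{-1}$, which is exactly the form recorded for $\mathcal{M}(\mathbf{0})$ in Theorem~\ref{stabilityps}.

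Next I would make the angle dependence of $F$ explicit. Using $T(\delta_j)\mathbf{e}=\cos\delta_j\,\mathbf{e}+\sin\delta_j\,\mathbf{e}_1$ and $J^{\top}\mathbf{e}=\mathbf{e}_1$, one obtains $F(\delta^*)_{jk}=\cos\delta^*_k\,(\mathbf{e}^\top[Y_2]_{jk}\mathbf{e}_1)-\sin\delta^*_k\,(\mathbf{e}^\top[Y_2]_{jk}\mathbf{e})$, where $[Y_2]_{jk}$ is the $(j,k)$ block of $Y_2$; thus each entry depends only on the single column angle $\delta^*_k$ and reduces at $\delta^*=\mathbf{0}$ to the D–Q block entry of $Y_2$. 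The structural engine is that every $2\times2$ block appearing in $Y_1$ and in $R_c-\omega_0L_c\mathbf{J}$ has the form $a\mathbf{I}_2+bJ$, the real representation of the impedance $a+\mathbf{j}b$; consequently $Y_1$, $Y_1^{-1}$ and the line/load/filter contribution to $Y_2$ are complex-symmetric (network reciprocity), with positive self (diagonal) terms dominating the mutual (off-diagonal) ones.

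Then I would establish the two dominance properties. For $|\delta^*_j|<\pi/2$ the factor $\cos\delta^*_j>0$ keeps each diagonal entry of $F(\delta^*)$ positive (inductive self-susceptance), while the off-diagonals are mutual admittances dominated by the diagonal; adding $\mathbf{I}_n$ only strengthens the diagonal, so $\mathbf{I}_n+\tfrac1\tau F(\delta^*)V_n$ is strictly diagonally dominant of admittance-type (near-M-matrix) sign pattern. Its inverse is then entrywise well-behaved, yielding a positive diagonal and strict diagonal dominance of $N(\delta^*)$, and hence of $\mathcal{M}(\delta^*)$. Specializing to $\delta^*=\mathbf{0}$, reciprocity gives $F(\mathbf{0})=F(\mathbf{0})^\top$, so $\mathbf{I}_n+\tfrac1\tau F(\mathbf{0})V_n$ and its inverse are symmetric and $\mathcal{M}(\mathbf{0})$ is symmetric; a real symmetric, strictly diagonally dominant matrix with positive diagonal is positive definite by Gershgorin, proving $\mathcal{M}(\mathbf{0})\succ 0$.

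The main obstacle is transferring diagonal dominance through the matrix inversion: strict diagonal dominance is not inherited by inverses in general, so the argument must lean on the M-matrix sign structure of $\mathbf{I}_n+\tfrac1\tau F(\delta^*)V_n$ (dominant positive diagonal, dominated off-diagonal mutual terms) to control the row sums of its inverse. Two secondary points must be checked: that the non-reciprocal droop term $\underline{n}_q$ entering $Y_2$ does not spoil the symmetry of $F(\mathbf{0})$, and that the off-diagonal rescaling by $k_{p,j}/k_{p,k}$ coming from the $k_p$-similarity does not violate the diagonal-dominance inequalities; both are expected to be benign in the angle and parameter regime of interest but should be verified explicitly.
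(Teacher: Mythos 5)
Your proposal follows essentially the same route as the paper's proof: reduce $\mathcal{M}(\delta^*)$ to $\mathbf{I}_n+(\mathbf{I}_n+\tfrac{1}{\tau}F(\delta^*)V_n)^{-1}$, exploit the $a\mathbf{I}_2+bJ$ admittance-block structure of $Y_2$ (with $\underline{n}_q$ taken small enough not to spoil it) to show $F(\delta^*)$ is strictly diagonally dominant with positive diagonal for $|\delta^*_j|<\pi/2$, pass dominance through the inversion, and obtain symmetry and positive definiteness of $\mathcal{M}(\mathbf{0})$ from network reciprocity and a Gershgorin argument. The one step you flag as the main obstacle --- that strict diagonal dominance is not generally inherited by inverses --- is precisely the step the paper closes by citing the inverse property of diagonally dominant matrices (Horn and Johnson, \emph{Topics in Matrix Analysis}, Theorem 2.5.11), and your two secondary caveats are well spotted: the paper handles $\underline{n}_q$ by the same ``sufficiently small'' assumption, while the $k_p$-similarity you retain is silently dropped in the paper's own simplification of $\mathcal{M}(\delta^*)$.
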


\begin{proof}\label{prf:lemstabilityps}
	Given the block diagonal structure of
	 $(G_l-\omega_0C_l\mathbf{J}),
	 (R_{\ell}-\omega_0L_{\ell}\mathbf{J})^{-1},
	(R_{c}-\omega_0 L_{c}\mathbf{J})$, and the fact that
	$\mathbf{B}(R_l-\omega_0L_l\mathbf{J})^{-1}\mathbf{B}^{\top}$
	 is a weighted Laplacian matrix, for sufficiently small  entries of $\underline{n}_q$  the entity $Y_2$ is 	an admittance matrix
	with the structure
	\begin{equation}
		\begin{bmatrix}
			a_{11}\mathbf{I}_2+b_{11}J &-a_{12}\mathbf{I}_2-b_{12}J &\ldots &-a_{1n}\mathbf{I}_2-b_{1n}J\\
			-a_{12}\mathbf{I}_2-b_{12}J &a_{22}\mathbf{I}_2+b_{22}J &\ldots &-a_{2n}\mathbf{I}_2-b_{2n}J
			\\
			\vdots &\vdots &\ddots &\vdots \\-a_{1n}\mathbf{I}_2-b_{nn}J&-a_{2n}\mathbf{I}_2-b_{2n}J &\ldots &a_{nn}\mathbf{I}_2+b_{nn}J
		\end{bmatrix}
	\end{equation}
	where
	$a_{ii}>0, b_{ii}>0,\,a_{ij}>0,\,	b_{ij}>0,\,
	a_{ii}>\sum |a_{ij}|,\, b_{ii}>\sum |b_{ij}|,\, \forall i\neq j,\, i,\, j=1,\ldots, n$  with $n=|N|$.
	For  $\delta^*_i<\pi/2$, let
	$c_{ii}=\cos \delta^*_{i}>0,\, s_{ii}=|\sin \delta^*_{i}|>0, \,  i=1\ldots n$.
	Then from~\eqref{fdelta} we have
	\begin{equation}
		F(\delta^*)=\underline{\mathbf{e}}^\top Y_2\mathbf{J}^{\top}\mathbf{T}(\delta^*)\mathbf{\underline{e}}=\begin{bmatrix}
			F_{11} &F_{12} &\ldots &F_{1n}\\
			F_{21} &F_{22} &\ldots &F_{2n}
			\\
			\vdots &\vdots &\ddots &\vdots \\F_{n1}&F_{n2}&\ldots &F_{nn}
		\end{bmatrix}
	\end{equation}
	where
	$F_{11}=a_{11}s_{11}+b_{11}c_{11}$;
	$F_{1n}=-(a_{1n}s_{nn}+b_{1n}c_{nn})$;
	$F_{22}=a_{22}s_{22}+b_{22}c_{22}$;
	$F_{2n}=-(a_{2n}s_{nn}+b_{2n}c_{nn})$;	
	$F_{nn}=a_{nn}s_{nn}+b_{nn}c_{nn}$;
	$F_{n1}=-(a_{1n}s_{11}+b_{1n}c_{11})$.
	We now check 
	the column diagonal dominance of $F(\delta^*)$, i.e. 
	for every column of $F(\delta^*)$, the magnitude of the diagonal entry in a column is compared to the sum of the magnitudes of all the other (non-diagonal) entries in that column. We have 
	$|F_{21}|+\ldots+|F_{n1}|\leq(|a_{12}|+\ldots+|a_{1n}|)s_{11}+(|b_{12}|+\ldots+|b_{1n}|)c_{11}<a_{11}s_{11}+b_{11}c_{11}=F_{11}$; 	
	similarly
	$|F_{12}|+\ldots+|F_{n2}|<F_{22}$; and
	$|F_{1n}|+\ldots+|F_{(n-1)n}|<F_{nn}$.
	Thus $F(\delta^*)$ is strictly  diagonally dominant with positive diagonal entries.
	Selecting $\tau=k_{I,j}/k_{p,j}$,
	$\tau>0$, gives $k_{I}k_{p}^{-1}=\tau\mathbf{I}_n$ and  $\mathcal{M}(\delta^*)
	=\mathbf{I}_n+(\mathbf{I}_n + \frac{1}{\tau}F(\delta^*)V_n)^{-1}$.
	Note that the
	 strict diagonal dominance of $\mathbf{I}_n + \frac{1}{\tau}F(\delta^*)V_n$
	follows from $F(\delta^*)$.
	Therefore, 	
	${\mathcal{M}}(\delta^*)$ is  strictly diagonally dominant with positive diagonal entries
	since $(\mathbf{I}_n + \frac{1}{\tau}F(\delta^*)V_n)^{-1}$ is  strictly diagonally dominant from the inverse property of diagonally dominant matrices (\cite[Theorem $2.5.11$]{horn1994topics}).
	Observe that $F(\mathbf{0})$ is symmetric
	since 
	$s_{ii}=0,\,c_{ii}=1,\,\forall i\in N$,
	and its positive definiteness follows by noting that $b_{ii}>\sum |b_{ij}|,\, \forall i\neq j,\, i,\, j=1,\ldots, n$.
	Since
	$\mathbf{I}_n+\frac{1}{\tau}F(\mathbf{0})V_n$ is positive definite, then the positive definiteness of
	$\mathcal{M}(\mathbf{0})=\mathbf{I}_n+(\mathbf{I}_n + \frac{1}{\tau}F(\mathbf{0})V_n)^{-1}$
	follows, noting that $(\mathbf{I}_n + \frac{1}{\tau}F(\mathbf{0})V_n)^{-1}$ is positive definite by the inverse property of positive definite matrices~\cite{horn2012matrix}. 	$\hfill\blacksquare$
\end{proof}

We now proceed to prove Theorem~\ref{stabilityps}.
The Laplacian $\mathcal{L}$ is positive semidefinite, having  exactly one zero eigenvalue and all others being strictly positive.
Thus   $\mathcal{L}\mathcal{M}(\mathbf{0})$ also has a single zero eigenvalue and the rest are strictly positive, which can easily be shown by noting that the  eigenvalues of $\mathcal{L}\mathcal{M}(\mathbf{0})$ are the same as the eigenvalues of $(\mathcal{M}(\mathbf{0})^{\frac{1}{2}})\mathcal{L}(\mathcal{M}(\mathbf{0})^{\frac{1}{2}})$, {since $\mathcal{M}(\mathbf{0})$ is positive definite from Lemma~\ref{ml}}.
$\mathcal{L}\mathcal{M}(\delta^*)$  always has a single eigenvalue at the origin since  $\mathcal{M}(\delta^*)$ is strictly diagonally dominant from Lemma~\ref{ml}. Hence, since the eigenvalues of a matrix vary continuously with its parameters~\cite{zedek1965continuity} and  $\mathcal{M}(\delta^*)$ varies continuously with $\delta^*$, there exists some sufficiently small values of $\delta^*$ such that the eigenvalues of $\mathcal{L}\mathcal{M}(\delta^*)$ are non-negative.
Therefore $\mathcal{L}\mathcal{M}(\delta^*)$ has a single eigenvalue at the origin with all other eigenvalues strictly positive when $\delta^*$ is sufficiently small, and hence all trajectories of~\eqref{e14} converge to an equilibrium point.
From an application of the Bauer-Fike theorem on $\Delta+ \hat H$ we derive a bound on $\delta^*$ as follows.
We note  that $\mathcal{L}\mathcal{M}(\mathbf{\delta^*}) = \mathcal{L}\mathcal{M}(\mathbf{0}) + \Delta =\hat H + \Delta$ with $\hat H = \mathcal{L}\mathcal{M}(\mathbf{0})$.
 Since both $\mathcal{L}$ and $\mathcal{M}(\mathbf{0})$ are symmetric matrices and $\mathcal{M}(\mathbf{0})$ is positive definite,
  $\hat H$ is diagonalizable {by means of} its eigenbasis $\Psi$ and a diagonal eigenvalue matrix $\Lambda$ such that $\hat H = \Psi\Lambda \Psi^{-1}$.
We apply the Bauer-Fike theorem to this matrix~\cite{bauer1960norms}, which states that for each eigenvalue $z$ of $\hat H + \Delta$ there is a corresponding eigenvalue $z_{\hat H}$ of $\hat H$ such that:
 \begin{equation}\label{eq:bauerfike}
 	|z - z_{\hat H}| \leq K \norm{\Delta}_2
 \end{equation}
 where $K$ is as defined in the Theorem statement. Both $\mathcal{L}$ and $\mathcal{L}\mathcal{M}(\mathbf{0})$ have exactly one zero eigenvalue, as already shown  above.
 Suppose the second smallest eigenvalue of $\hat H$, $\lambda_{n-1}(\hat H)$, satisfies condition \eqref{matrixcond}.
 Then the second smallest eigenvalue of $\mathcal{L}\mathcal{M}(\mathbf{\delta^*})$ is strictly positive and hence
 all other eigenvalues of $\mathcal{L}\mathcal{M}(\mathbf{\delta^*})$ are strictly positive. $\hfill\blacksquare$

\section{Linearized Static Model }
\label{timesep}

{In this section we derive the linearized static model~\eqref{e13}.}
	Given the timescale separation considered, the time derivatives in
{the} linearized
~\eqref{net}--\eqref{der8},~\eqref{idc2},~\eqref{acv1} {are set}
to zero.   Hence, from the linearized~\eqref{net},~\eqref{netc} we get
\begin{equation}\label{e4}
	\tilde{V}_{bDQ}=Y_1^{-1} \tilde I_{oDQ}
\end{equation}
where
$Y_1=(G_l-\omega_0C_l\mathbf{J})+(R_{\ell}-\omega_0L_{\ell}\mathbf{J})^{-1}+\mathbf{B}(R_l-\omega_0L_l\mathbf{J})^{-1}\mathbf{B}^{\top}$,	and from the linearized
~\eqref{crfinv2},~\eqref{acv1} \yo{and setting their time derivatives to zero} we have
\begin{equation} \label{e6}
	(R_{c}-\omega_0 L_{c}\mathbf{J})\tilde I_{oDQ}= \tilde V_{oDQ}-\tilde V_{bDQ},
\end{equation}	
\begin{equation} \label{e6e}
	\tilde V_{oDQ}=\mathbf{J}^{\top}\mathbf{T}(\delta^*)\mathbf{\underline{e}}V_n\tilde\delta+\underline{n}_q\tilde I_{oDQ}.
\end{equation}
Using \eqref{e4} and substituting \eqref{e6e} for $\tilde V_{oDQ}$ in \eqref{e6} gives
\begin{equation} \label{e10}
	\begin{split}
		\tilde I_{oDQ}=Y_2\mathbf{J}^{\top}\mathbf{T}(\delta^*)\mathbf{\underline{e}}V_n\tilde\delta
	\end{split}
\end{equation}
where	$Y_2=((R_{c}-\omega_0 L_{c}\mathbf{J})+Y_1^{-1}-\underline{n}_q)^{-1}$.
{Substituting~\eqref{der8} into~\eqref{crfinv2a} gives $\dot{\delta}=-k_{I}\delta-k_{p}\underline{\mathbf{e}}^{\top}{I}_{oDQ} -  \chi$.
Linearizing this and setting its time derivative to zero yields $k_{I}\tilde \delta=-k_{p}\underline{\mathbf{e}}^{\top}\tilde{I}_{oDQ} - \tilde \chi$, which then becomes~\eqref{e13} by substituting~\eqref{e10}}.

\section{Definition of Parameters}
\label{Amat}
In this section we define the matrices in the linearized model \eqref{linmodell} and LMI \eqref{lmii}. {In particular, we have} 
\newline
\hspace{-0mm}$A=\Gamma^{-1}\hat{A}$,
$B=\Gamma^{-1}\hat{B}$,
$B_u=\Gamma^{-1}\hat{B}_u$,\\
$\Gamma=\text{blkdiag}(\ryo{\mathbf{I}_{n}, C_{dc}, L_{f}, C_f, L_c, \mathbf{I}_{2n},  \mathbf{I}_{2n}})$,\\
$\hat {B}=\begin{bmatrix}
	\mathbf{0}_{2n\times 2n} &-\frac{1}{2}\mathbf{I}^{*}_{DQ} &\frac{1}{2}\mathbf{V}^{*}_{dc} &\mathbf{0}_{2n\times8n}
\end{bmatrix}^{\top}$, $D_u=\mathbf{0}$,\\
$\hat{B}^{\top}_u=C=[\mathbf{0}_{2n\times7n}\,\, \mathbf{I}_{2n}\,\, \mathbf{0}_{2n\times4n}]$, $C_{\delta}=[\mathbf{I}_{n}\,\, \mathbf{0}_{n\times 12n}]$.\\
Let $\hat{G}_{dc}=G_{dc}+\Lambda_{P}$,
$Z_f=R_f-\omega_{0}L_f\mathbf{J}$,
$Z_s=G_s-\omega_{0}C_f\mathbf{J}$,
$Z_c=R_c-\omega_{0}L_c\mathbf{J}$, then
\begin{equation*}
	\begin{split}
		\hat{A}(1n,:)&=\mathbf{0}_{n\times13n}\hspace{6mm}
		\hat{A}(2n,:)=[\mathbf{0}_{n\times2n}\,\, \mathbf{I}_n \,\, \mathbf{0}_{n\times 10n}]\\
		\hat{A}(3n,:)&=\begin{bmatrix}\mathbf{0}_{n\times n}& -\Lambda_{I} & -\hat{G}_{dc} &-\frac{1}{2}m^{*\top}_{DQ} &\mathbf{0}_{n\times 8n}]\end{bmatrix}\\
		\hat{A}(4n:5n,:)&=\begin{bmatrix}
			\mathbf{0}_{2n\times2n} &\frac{1}{2}m^*_{DQ} &-Z_f &-\mathbf{I}_{2n} &\mathbf{0}_{2n\times6n}
		\end{bmatrix}\\
		\hat{A}(6n:7n,:)&=\begin{bmatrix}
			\mathbf{0}_{2n\times3n} &\mathbf{I}_{2n} &-Z_s -\mathbf{I}_{2n} &\mathbf{0}_{2n\times4n}
		\end{bmatrix}\\	
		\hat{A}(8n:9n,:)&=\begin{bmatrix}
			\mathbf{0}_{2n\times5n} &\mathbf{I}_{2n} &-Z_c  &\mathbf{0}_{2n\times4n}
		\end{bmatrix}\\
		\hat{A}(10n:11n,:)&=\begin{bmatrix}
			\mathbf{J}\mathbf{T}(\delta^*)\underline{\mathbf{e}}V_n  &
			\mathbf{0}_{2n\times4n} &\mathbf{I}_{2n} &-\underline{n}_q  &\mathbf{0}_{2n\times4n}
		\end{bmatrix}\\
		\hat{A}(12n:13n,:)&=[
		c_p\mathbf{V}^*_{dc}\mathbf{J}\mathbf{T}(\delta^*)\underline{\mathbf{e}}V_n  \,\,
		\mathbf{0}_{2n\times n} \,\,\mathbf{I}^*_{DQ} \,\,\mathbf{V}^*_{dc} \\&\hspace{6mm}c_p\mathbf{V}^*_{dc} \,\,-c_p\mathbf{V}^*_{dc}\underline{n}_q  \,\,c_I\mathbf{V}^*_{dc} \,\,\mathbf{0}_{2n\times2n}
		].
	\end{split}
\end{equation*}
\begin{equation*}
	\begin{split}
		\Sigma=&P(A-C_{\delta}^{\top}k_p\underline{\mathbf{e}}^{\top}C-C_{\delta}^{\top}k_IC_{\delta}-B\hat K)\\&+ (A-C_{\delta}^{\top}k_p\underline{\mathbf{e}}^{\top}C-C_{\delta}^{\top}k_IC_{\delta}-B\hat K)^{\top}P\\
		\hat K=&\begin{bmatrix}
			k_1& \mathbf{0}_{2n\times n}& k_2& k_3& k_4& k_5& k_6& k_7		\end{bmatrix}
	\end{split}
\end{equation*}
$k_1=\lambda_pc_p\mathbf{V}^*_{dc}\mathbf{J}\mathbf{T}(\delta^*)\underline{\mathbf e}V_n$,
$k_2=-\lambda_p\mathbf{I}^*_{DQ}$,
$k_3=-\lambda_p\mathbf{V}^*_{dc}$,
$k_4=\lambda_pc_p\mathbf{V}^*_{dc}$,
$k_5=-\lambda_pc_p\mathbf{V}^*_{dc}\underline{n}_q$,
$k_6=\lambda_pc_I\mathbf{V}^*_{dc}$,
$k_7=\lambda_I$.

\bibliographystyle{IEEEtran}
\bibliography{bib_inverterbased}

\begin{thebibliography}{10}
\providecommand{\url}[1]{#1}
\csname url@samestyle\endcsname
\providecommand{\newblock}{\relax}
\providecommand{\bibinfo}[2]{#2}
\providecommand{\BIBentrySTDinterwordspacing}{\spaceskip=0pt\relax}
\providecommand{\BIBentryALTinterwordstretchfactor}{4}
\providecommand{\BIBentryALTinterwordspacing}{\spaceskip=\fontdimen2\font plus
\BIBentryALTinterwordstretchfactor\fontdimen3\font minus
  \fontdimen4\font\relax}
\providecommand{\BIBforeignlanguage}[2]{{%
\expandafter\ifx\csname l@#1\endcsname\relax
\typeout{** WARNING: IEEEtran.bst: No hyphenation pattern has been}%
\typeout{** loaded for the language `#1'. Using the pattern for}%
\typeout{** the default language instead.}%
\else
\language=\csname l@#1\endcsname
\fi
#2}}
\providecommand{\BIBdecl}{\relax}
\BIBdecl

\bibitem{federicoDofler2018}
F.~Milano, F.~D{\"o}rfler, G.~Hug, D.~J. Hill, and G.~Verbi{\v{c}},
  ``Foundations and challenges of low-inertia systems,'' in \emph{2018 Power
  Systems Computation Conference (PSCC)}.\hskip 1em plus 0.5em minus
  0.4em\relax IEEE, 2018, pp. 1--25.

\bibitem{simpson2013}
J.~W. Simpson-Porco, F.~D{\"o}rfler, and F.~Bullo, ``Synchronization and power
  sharing for droop-controlled inverters in islanded microgrids,''
  \emph{Automatica}, vol.~49, no.~9, pp. 2603--2611, 2013.

\bibitem{andreasson2013distributed}
M.~Andreasson, D.~V. Dimarogonas, K.~H. Johansson, and H.~Sandberg,
  ``Distributed vs. centralized power systems frequency control,'' in
  \emph{2013 IEEE European Control Conference (ECC)}, pp. 3524--3529.

\bibitem{yemiisgt2021}
Y.~Ojo, J.~Watson~D., K.~Laib, and I.~Lestas, ``A decentralized frequency and
  voltage control scheme for grid-forming inverters,'' \emph{2021 IEEE PES
  Innovative Smart Grid Technologies Europe (ISGT Europe)}, pp. 1--5, 2021.

\bibitem{chandorkar1993}
M.~C. Chandorkar, D.~M. Divan, and R.~Adapa, ``Control of parallel connected
  inverters in standalone ac supply systems,'' \emph{IEEE Transactions on
  Industry Applications}, vol.~29, no.~1, pp. 136--143, 1993.

\bibitem{pogaku2007}
N.~Pogaku, M.~Prodanovic, and T.~C. Green, ``Modeling, analysis and testing of
  autonomous operation of an inverter-based microgrid,'' \emph{IEEE
  Transactions on Power Electronics}, vol.~22, no.~2, pp. 613--625, 2007.

\bibitem{arghirTaouba2018}
C.~Arghir, T.~Jouini, and F.~Dörfler, ``Grid-forming control for power
  converters based on matching of synchronous machines,'' \emph{Automatica},
  vol.~95, pp. 273 -- 282, 2018.

\bibitem{yemimo2020}
Y.~Ojo, M.~Benmiloud, and I.~Lestas, ``Frequency and voltage control schemes
  for three-phase grid-forming inverters,'' \emph{IFAC-PapersOnLine}, vol.~53,
  no.~2, pp. 13\,471--13\,476, 2020, 21st IFAC World Congress.

\bibitem{yemijeremy2019}
Y.~Ojo, J.~Watson, and I.~Lestas, ``An improved control scheme for grid-forming
  inverters,'' in \emph{2019 IEEE PES Innovative Smart Grid Technologies Europe
  (ISGT-Europe)}.\hskip 1em plus 0.5em minus 0.4em\relax IEEE, 2019, pp. 1--5.

\bibitem{qzhongweiss2011}
Q.~C. Zhong and G.~Weiss, ``Synchronverters: Inverters that mimic synchronous
  generators,'' \emph{IEEE Transactions on Industrial Electronics}, vol.~58,
  no.~4, pp. 1259--1267, 2011.

\bibitem{kolluri2017power}
R.~R. Kolluri, I.~Mareels, T.~Alpcan, M.~Brazil, J.~de~Hoog, and D.~A. Thomas,
  ``Power sharing in angle droop controlled microgrids,'' \emph{IEEE
  Transactions on Power Systems}, vol.~32, no.~6, pp. 4743--4751, 2017.

\bibitem{majumder2009angle}
R.~Majumder, A.~Ghosh, G.~Ledwich, and F.~Zare, ``Angle droop versus frequency
  droop in a voltage source converter based autonomous microgrid,'' in
  \emph{2009 IEEE Power \& Energy Society General Meeting}.\hskip 1em plus
  0.5em minus 0.4em\relax IEEE, 2009, pp. 1--8.

\bibitem{ritwikMajumder2010}
R.~Majumder, G.~Ledwich, A.~Ghosh, S.~Chakrabarti, and F.~Zare, ``Droop control
  of converter-interfaced microsources in rural distributed generation,''
  \emph{IEEE Transactions on Power Delivery}, vol.~25, no.~4, pp. 2768--2778,
  2010.

\bibitem{ysunGuerrero2017}
Y.~Sun, X.~Hou, J.~Yang, H.~Han, M.~Su, and J.~M. Guerrero, ``New perspectives
  on droop control in ac microgrid,'' \emph{IEEE Transactions on Industrial
  Electronics}, vol.~64, no.~7, pp. 5741--5745, 2017.

\bibitem{Rocabert2012}
J.~Rocabert, A.~Luna, F.~Blaabjerg, and P.~Rodríguez, ``Control of power
  converters in ac microgrids,'' \emph{IEEE Transactions on Power Electronics},
  vol.~27, no.~11, pp. 4734--4749, 2012.

\bibitem{sadabadi2016plug}
M.~S. Sadabadi, Q.~Shafiee, and A.~Karimi, ``Plug-and-play voltage
  stabilization in inverter-interfaced microgrids via a robust control
  strategy,'' \emph{IEEE Transactions on Control Systems Technology}, vol.~25,
  no.~3, pp. 781--791, 2016.

\bibitem{tucci2020scalable}
M.~Tucci and G.~Ferrari-Trecate, ``A scalable, line-independent control design
  algorithm for voltage and frequency stabilization in ac islanded
  microgrids,'' \emph{Automatica}, vol. 111, p. 108577, 2020.

\bibitem{tucci2017voltage}
------, ``Voltage and frequency control in ac islanded microgrids: a scalable,
  line-independent design algorithm,'' \emph{IFAC-PapersOnLine}, vol.~50,
  no.~1, pp. 13\,922--13\,927, 2017.

\bibitem{riverso2014plug}
S.~Riverso, F.~Sarzo, and G.~Ferrari-Trecate, ``Plug-and-play voltage and
  frequency control of islanded microgrids with meshed topology,'' \emph{IEEE
  Transactions on Smart Grid}, vol.~6, no.~3, pp. 1176--1184, 2014.

\bibitem{tucci2016plug}
M.~Tucci, A.~Floriduz, S.~Riverso, and G.~Ferrari-Trecate, ``Plug-and-play
  control of ac islanded microgrids with general topology,'' in \emph{2016
  European Control Conference (ECC)}.\hskip 1em plus 0.5em minus 0.4em\relax
  IEEE, 2016, pp. 1493--1500.

\bibitem{jeremylestas2020}
J.~D. Watson, Y.~Ojo, K.~Laib, and I.~Lestas, ``A scalable control design for
  grid-forming inverters in microgrids,'' \emph{IEEE Transactions on Smart
  Grid}, vol.~12, no.~6, pp. 4726--4739, 2021.

\bibitem{moradi2010robust}
R.~Moradi, H.~Karimi, and M.~Karimi-Ghartemani, ``Robust decentralized control
  for islanded operation of two radially connected dg systems,'' in \emph{2010
  IEEE International Symposium on Industrial Electronics}.\hskip 1em plus 0.5em
  minus 0.4em\relax IEEE, 2010, pp. 2272--2277.

\bibitem{strehle2019port}
F.~Strehle, A.~J. Malan, S.~Krebs, and S.~Hohmann, ``A port-hamiltonian
  approach to plug-and-play voltage and frequency control in islanded
  inverter-based ac microgrids,'' in \emph{2019 IEEE 58th Conference on
  Decision and Control (CDC)}, pp. 4648--4655.

\bibitem{tayyebi2020almost}
A.~Tayyebi, A.~Anta, and F.~D{\"o}rfler, ``Almost globally stable grid-forming
  hybrid angle control,'' in \emph{2020 59th IEEE Conference on Decision and
  Control (CDC)}.\hskip 1em plus 0.5em minus 0.4em\relax IEEE, 2020, pp.
  830--835.

\bibitem{jouini2021inverse}
T.~Jouini, A.~Rantzer, and E.~Tegling, ``Inverse optimal control for angle
  stabilization in converters-based generation,'' \emph{arXiv preprint
  arXiv:2101.11141}, 2021.

\bibitem{khalil2014}
H.~K. Khalil, \emph{Nonlinear Systems}, 3rd~ed.\hskip 1em plus 0.5em minus
  0.4em\relax Pearson New York, 2015.

\bibitem{ashourloo2013stabilization}
M.~Ashourloo, A.~Khorsandi, and H.~Mokhtari, ``Stabilization of dc microgrids
  with constant-power loads by an active damping method,'' in \emph{4th Annual
  International Power Electronics, Drive Systems and Technologies
  Conference}.\hskip 1em plus 0.5em minus 0.4em\relax IEEE, 2013, pp. 471--475.

\bibitem{singh2011applications}
B.~Singh, N.~Sharma, A.~Tiwari, K.~Verma, and S.~Singh, ``Applications of
  phasor measurement units (pmus) in electric power system networks
  incorporated with facts controllers,'' \emph{International Journal of
  Engineering, Science and Technology}, vol.~3, no.~3, 2011.

\bibitem{strehle2021unified}
F.~Strehle, P.~Nahata, A.~J. Malan, S.~Hohmann, and G.~Ferrari-Trecate, ``A
  unified passivity-based framework for control of modular islanded ac
  microgrids,'' \emph{IEEE Transactions on Control Systems Technology}, 2021.

\bibitem{zhang2014online}
Y.~Zhang and L.~Xie, ``Online dynamic security assessment of microgrid
  interconnections in smart distribution systems,'' \emph{IEEE Transactions on
  Power Systems}, vol.~30, no.~6, pp. 3246--3254, 2014.

\bibitem{zhang2016transient}
------, ``A transient stability assessment framework in power
  electronic-interfaced distribution systems,'' \emph{IEEE Transactions on
  Power Systems}, vol.~31, no.~6, pp. 5106--5114, 2016.

\bibitem{guerreroMatas2004}
J.~M. Guerrero, L.~G. De~Vicuna, J.~Matas, M.~Castilla, and J.~Miret, ``A
  wireless controller to enhance dynamic performance of parallel inverters in
  distributed generation systems,'' \emph{IEEE Transactions on power
  electronics}, vol.~19, no.~5, pp. 1205--1213, 2004.

\bibitem{horn1994topics}
R.~A. Horn, R.~A. Horn, and C.~R. Johnson, \emph{Topics in matrix
  analysis}.\hskip 1em plus 0.5em minus 0.4em\relax Cambridge university press,
  1994.

\bibitem{horn2012matrix}
R.~A. Horn and C.~R. Johnson, \emph{Matrix analysis}.\hskip 1em plus 0.5em
  minus 0.4em\relax Cambridge university press, 2012.

\bibitem{zedek1965continuity}
M.~Zedek, ``Continuity and location of zeros of linear combinations of
  polynomials,'' \emph{Proceedings of the American Mathematical Society},
  vol.~16, no.~1, pp. 78--84, 1965.

\bibitem{bauer1960norms}
F.~L. Bauer and C.~T. Fike, ``Norms and exclusion theorems,'' \emph{Numerische
  Mathematik}, vol.~2, no.~1, pp. 137--141, 1960.

\end{thebibliography}

	\end{document}